\def\@url#1{{\tt\def~{\lower3.5pt\hbox{\char'176}}\def\_{\char'137}#1}}
\newtheorem*{rep@thm}{\rep@title}
\newcommand{\newrepthm}[2]{%
\newenvironment{rep#1}[1]{%
 \def\rep@title{#2 \ref{##1}}%
 \begin{rep@thm}}%
 {\end{rep@thm}}}
\newtheorem{thm}{Theorem}[section]
\newtheorem{cor}{Corollary}[section]
\newtheorem{prop}{Proposition}[section]
\newtheorem{lem}{Lemma}[section]
\newtheorem*{thm*}{Theorem}
\theoremstyle{definition}
\newtheorem{defn}{Definition}[section]
\theoremstyle{remark}
\newtheorem{rem}{Remark}[section]
\newtheorem{example}{Example}[section]
\let\c@lem=\c@thm
\let\c@cor=\c@thm
\let\c@prop=\c@thm
\let\c@lem=\c@thm
\let\c@defn=\c@thm
\let\c@exmps=\c@thm
\let\c@rem=\c@thm
\let\c@warn=\c@thm
\let\c@claim=\c@thm
\let\c@quest=\c@thm
\let\c@notation=\c@thm
\let\c@note=\c@thm
\let\c@example=\c@thm
\numberwithin{equation}{section}
\newcommand{\Z}{\mathbb{Z}}
\newcommand{\Q}{\mathbb{Q}}
\newcommand{\F}{\mathbb{F}}
\newcommand{\G}{\mathbb{G}}
\newcommand{\W}{\mathbb{W}}
\newcommand{\Sn}{\mathbb{S}}
\DeclareSymbolFontAlphabet{\scr}{rsfs}
\newcommand{\smsh}{\wedge}
\newcommand{\xra}{\xrightarrow}
\def\quickop#1{\expandafter\newcommand\csname #1\endcsname{\operatorname{#1}}}
\DeclareMathOperator{\Gal}{Gal}
\DeclareMathOperator{\Pic}{Pic}
\DeclareMathOperator{\Map}{Map}
\DeclareMathOperator{\map}{map}
\newcommand{\Kn}{{\mathbf{K}}} %Morava K-theory K(n)
\newcommand{\E}{{\mathbf{E}}} 
\newcommand{\Sdet}{S\langle {\det} \rangle}
\newcommand{\Zdet}{\mathbb{Z}_{p}\langle {\det} \rangle}
\newcommand{\DDet}{\langle \det\rangle}
\newcommand{\haut}{\mathrm{haut}}
\newcommand{\alg}{\mathrm{alg}}
\title{Constructing the determinant sphere using a Tate twist}
\author[Barthel]{Tobias Barthel}
\address[Barthel]{Max Planck Institute for Mathematics}
\email{tbarthel@mpim-bonn.mpg.de}
\author[Beaudry]{Agn\`es Beaudry}
\address[Beaudry]{Department of Mathematics, University of Colorado Boulder}
\email{agnes.beaudry@colorado.edu}
\author[Goerss]{Paul G. Goerss}
\address[Goerss]{Department of Mathematics, Northwestern University}
\email{pgoerss@math.northwestern.edu}
\author[Stojanoska]{Vesna Stojanoska}
\address[Stojanoska]{Department of Mathematics, University of Illinois at Urbana-Champaign}
\email{vesna@illinois.edu}
\thanks{This material is based upon work supported by the National Science Foundation under Grant No.~DMS-1812122 and Grant No.~DMS-1725563. Barthel was partially supported by the DNRF92 and the European Unions Horizon 2020 research and innovation programme under the Marie Sklodowska-Curie grant agreement No.~751794. 
The authors would like to thank the Isaac Newton Institute for Mathematical Sciences for support and hospitality during the program Homotopy Harnessing Higher Structures when work on this paper was undertaken. This work was supported by EPSRC Grant Number EP/R014604/1.}
\newcommand{\SG}{\mathrm{S}\G}
\newcommand{\ST}{S(1)}
\newcommand{\K}{\mathbb{K}} %closed subgroup of Morava
\newcommand{\mm}{\mathfrak{m}} %maximal ideal in E_0
\newcommand{\Func}{F_c}
\newcommand{\MJ}[1]{\mathrm{M}_{J(#1)}}
\newcommand{\FF}{\F}
\newcommand{\GG}{\G}
\newcommand{\WW}{\W}
\newcommand{\ZZ}{\Z}
\newcommand{\Gl}{{\mathrm{Gl}}}
\newcommand{\longr}{\longrightarrow}
\newcommand{\bdet}{\langle\det\rangle}
\newcommand{\STa}{\widetilde\ST}
\newcommand{\mmu}{\bbmu}
\newcommand{\Cc}{C}
\begin{document}

\maketitle

\begin{abstract} 
Following an idea of Hopkins, we construct a model of the determinant sphere $\Sdet$ in the category of $K(n)$-local spectra. To do this, we build a spectrum which we call the Tate sphere $S(1)$. This is a $p$-complete sphere with a natural continuous action of $\Z_p^\times$.  The Tate sphere inherits an action of $\mathbb{G}_n$ via the determinant and smashing Morava $E$-theory with $S(1)$ has the effect of twisting the action of $\mathbb{G}_n$. 
A large part of this paper consists of analyzing continuous $\mathbb{G}_n$-actions and their homotopy fixed points in the setup of Devinatz and Hopkins. 
\end{abstract}

\section{Introduction}

Let $p$ be a prime and  $n > 0$ an integer; these will be fixed throughout and we will always suppress $p$ and
mostly suppress $n$ from the notation. Let $\E =E_n$ denote the Lubin--Tate spectrum associated to the Honda formal group law of height $n$ over $\FF_{p^n}$, and let $\Kn=K(n)$ be the corresponding Morava $K$-theory at height $n$ at the prime $p$. As is the usual convention, given any spectrum $X$, we write
\[
\E_\ast X = \pi_\ast L_{\Kn}(\E \smsh X)
\]
where $L_{\Kn}$ denotes $\Kn$-localization. 

We are interested in the $\Kn$-local category and, in particular, one very interesting 
spectrum therein which arises from comparing two dualities. The first of these duality functors is Spanier--Whitehead duality, sending $X$ to $D_nX=F(X,L_{\Kn}S^0)$. If $X$ is a dualizable
spectrum -- for example if $X$ is a finite spectrum --  then $\E_\ast D_nX \cong \E^{-\ast}X$ and can be
computed by a universal coefficient spectral sequence. The second is Gross--Hopkins duality, sending
$X$ to $I_nX =F(M_nX, I_{\Q/\Z})$, the Brown--Comenetz dual of its monochromatic layer. Specifically, $M_nX$ is the fiber of $L_nX \to L_{n-1}X$ and $I_{\Q/\Z}$ is the spectrum representing the cohomology theory $I_{\Q/\Z}^*(X)=\Hom_{\Z}(\pi_*X, \Q/\Z)$. 
It is a consequence of the work of Gross and Hopkins that the dual $I_n$ of
the sphere $L_{\Kn}S^0$ is invertible in the $\Kn$-local category and, hence, we have for any spectrum $X$ a natural equivalence
\[
I_nX \simeq L_{\Kn}(D_nX \smsh I_n).
\]

At this point, information about the homotopy type of $I_n$ becomes vital, and one gets a handle on it using that the spectrum $\E$ has an action by the Morava
stabilizer group $\GG = \GG_n$. Consequently, the graded $\E_\ast$-module $\E_\ast X$ has
a continuous action by $\GG$, giving it the structure of a \emph{Morava module} (see Definition~5.3.20 \cite{BarthelBeaudry}).

The key to the invertibility of $I_n$ is the calculation of the Morava module $\E_\ast I_n$. The group $\GG$ is a semidirect product $\Sn \rtimes \Gal(\F_{p^n}/\FF_p)$, where $\Sn=\Sn_n$ is the automorphism group of the formal group law of $\Kn$. The group $\Sn$ can be identified with a subgroup of the general linear group $\Gl_n(\WW)$, where $\WW$ denotes the Witt vectors on the finite field $\FF_{p^n}$. The group $\Sn$ has enough symmetry that the 
determinant $\Gl_n(\WW) \to \WW^\times$ restricts to a homomorphism
\[
\det\colon\Sn \longrightarrow \ZZ_p^\times,
\]
which can be extended to $\GG$ as the composite
\[ \det\colon\GG = \Sn \rtimes \Gal(\FF_{p^n}/\FF_p) \xrightarrow{\det \times \id} \ZZ_p^\times \times \Gal(\FF_{p^n}/\FF_p) \xrightarrow{proj_1} \ZZ_p^\times. \]

This gives a $\GG$-action on $\ZZ_p$, and we write the corresponding representation as $\Zdet$. If $M$ is a Morava module, we can 
define a new Morava module by $M\langle {\det} \rangle = M \otimes_{\ZZ_p} \Zdet$ with the diagonal $\GG$-action. Then
we have by \cite{grosshopkins1} and \cite{StrickGrossHop} an isomorphism of Morava modules
\[
\E_\ast I_n = \E_\ast(S^{n^2-n})\DDet.
\]
If the prime is large ($2p > \mathrm{max}\{n^2+1,2n+2\}$) this determines the homotopy type of $I_n$. If the prime
is not large, then we would like a fixed model $\Sdet$ of an invertible spectrum in the $\Kn$-local category equipped
with an isomorphism
\[
\E_\ast \Sdet \cong \E_\ast\DDet.
\]
Then we have a $\Kn$-local equivalence
\[
I_n \simeq S^{n^2-n} \smsh \Sdet \smsh P_n,
\]
where $P_n$ is an invertible $\Kn$-local spectrum with $\E_\ast P_n \cong \E_\ast S^0$ as Morava modules, and attention
turns to identifying $P_n$. In the known cases this comes down to calculating the homotopy groups of $I_nX$ for $X$ a particularly nice type $n$ complex. See \cite{GHM_pic} for analysis of $P_n$ at $n=2=p-1$; the case $n=1=p-1$ was done by \cite{HMS_pic} and also appears in \cite{HS-KRD,GHM_pic}.

The point of this note is to give a construction of a model of $\Sdet$ valid at all primes $p$ and all $n > 0$. 
We actually give two constructions of $\Sdet$, one using homotopy fixed points, following an idea of Mike Hopkins, and another, 
more naive and direct one, following ideas from \cite{GHM_pic,craig_imj}, fixing the typos therein and extending the construction to the prime $2$. A different construction of $\Sdet$, valid at primes large with respect to the given height and choice-free, was given by Peterson in \cite[Cor.~3]{PetersonEric}. Since the Morava module determines an invertible $\Kn$-local object at large primes, the two constructions give equivalent spectra in this situation.

The first model will evidently have the property that $L_{\Kn}(\E^{h\K} \smsh \Sdet) =
\E^{h\K}$ for all closed subgroups $\K$ in the kernel of the determinant. The key to this construction is to introduce a spectrum $\ST$ with a continuous $\G$--action, non-equivariantly equivalent to the $p$-complete sphere spectrum $S^0=S^0_p$, and such that smashing with it naturally twists $\G$--actions by the determinant representation. Then we define 
\[ \Sdet = (\E \smsh \ST)^{h\GG},\]
the action on the right-hand side being diagonal. The following is our main result. 

\begin{repthm}{thm:pretty-damn-cool} 
There is a canonical $\GG$-equivariant equivalence $f\colon \E \smsh \Sdet \to \E \smsh \ST$, where the action of $\GG$ on the source is via the action on $\E$, while on the target it is diagonal. This induces an isomorphism of Morava modules $\E_*\Sdet \cong \E_*{\left<\det \right>}$.
\end{repthm}

If $\K$ is a closed subgroup of $\G$ in the kernel of the determinant, taking $\K$-homotopy fixed points in this equivalence gives the desired result (\cref{cor:invariance})
\[ \E^{h\K} \smsh \Sdet \simeq (\E \smsh \ST )^{h\K} \simeq \E^{h\K}.\]

This project gives a chance to revisit and give an encomium on the amazing paper of Devinatz and Hopkins on 
fixed point spectra in the $\Kn$-local category \cite{DH}. Distilled down we have the following question: let $X$ be
a spectrum with a continuous action of the Morava stabilizer group $\GG$. We can then form the
$\GG$-spectrum $Z= \E \smsh X$ with diagonal $\GG$-action and discuss the homotopy type of
$Z^{h\GG}=(\E \smsh X)^{h\GG}$. Note that $\E_\ast Z=\pi_\ast L_{\Kn}(\E \smsh Z)$ has two
$\GG$-actions: the Morava module action on $\E$ and the action on $Z$. A consequence of our results is that
if $X$ is dualizable in the $\Kn$-local category, then
\begin{equation}\label{eq:maineq}
\E_\ast\left(Z^{h\GG}\right) = \E_\ast (\E \smsh X)^{h\GG} \cong \E_\ast X
\end{equation}
and the Morava module action on $\E_\ast (\E \smsh X)^{h\GG}$ corresponds to the diagonal action on
\[
\E_*X = \pi_\ast L_{\Kn}(\E \smsh X).
\]
An analogue of this result for arbitrary spectra $X$ with \emph{trivial} $\GG$-action was proven by Davis and Torii~\cite{davistorii}. The equivalence \eqref{eq:maineq} is not hard to prove once we have come to terms with the notion of a continuous
$\GG$-action. Since we are making a homology calculation we need cosimplicial techniques, and this is exactly what
Devinatz and Hopkins supply. 

We close with a remark on our choice of the formal group we use to specify Morava $K$-theory and $E$-theory. At the beginning of this introduction, we specified the Honda formal group over $\FF_{p^n}$. This was simply because \cite{DH} is written for the Honda formal group. Presumably, the work of Devinatz and Hopkins goes through without change for any height $n$ formal group over any finite extension $\FF_q$ of the prime field $\FF_p$. If this is the case, we could choose any $F$ so that the map $\det\colon\Aut(F/\FF_q) \to \ZZ_p^\times$ is surjective.

\subsection*{Acknowledgements}

We would like to thank Hans-Werner Henn, Mike Hopkins and Charles Rezk for helpful conversations and the referee for their comments.

\section{Continuous $\G$ actions and their homotopy fixed points}\label{sec:continuousactions}

As is perhaps apparent from the introduction, we will assume our readership has access to the standard framework
of $\Kn$-local homotopy theory. The usual source for an in-depth study of the technicalities
is Hovey and Strickland \cite{hovstrmemoir} and basic introductions can be found in almost any paper on chromatic homotopy 
theory.  We were especially thorough in \cite[\S 2]{BGH}. 

Less familiar is the analysis of point-set properties of the action of Morava stabilizer group $\GG$ on the spectrum $\E$. We
will need to use an explicit construction of the homotopy fixed points. For our purposes the original definition by
Devinatz and Hopkins~\cite{DH} will do. The reader interested in extensions and variations of the original notion may want to consult 
work such as Behrens--Davis \cite{behrensquick}, Davis--Quick  \cite{davisquick} and Quick \cite{quick2}.

We will also not access the full power and structure of equivariant stable
homotopy theory. 
Our $G$-spectra will simply be $G$-objects in some suitable category of spectra; when $G$ is profinite, we will also use a simple notion of continuity (see \cref{def:cont-g-spec}).

We start with some algebra. Recall that $\E_\ast = \WW\llbracket u_1,\ldots,u_{n-1}\rrbracket[u^{\pm 1}]$ where the power series ring is in
degree zero and the degree of $u$ is $-2$ and let $\mm \subset \E_0$ be the maximal ideal. 

\begin{rem}\label{rem:compl-tower} 
Before we proceed further, we need to establish some more notation. Using the periodicity results of Hopkins and Smith \cite{nilpotence2}, Hovey and
Strickland produce a sequence of ideals
$J(i) \subseteq \mm \subseteq \E_0$ and finite type $n$ spectra $\MJ{i}$ with the following properties:
\begin{enumerate}

\item $J(i+1) \subseteq J(i)$ and $\bigcap_i J(i) = 0$;

\item $\E_0/J(i)$ is finite;

\item $\E_0(\MJ{i}) \cong \E_0/J(i)$ and there are spectrum maps $q\colon \MJ{i+1} \to \MJ{i}$ realizing the quotient
$\E_0/J(i+1) \to \E_0/J(i)$;

\item There are maps $\eta=\eta_i\colon S^0 \to \MJ{i}$ inducing the quotient map $\E_0 \to \E_0/J(i)$ and $q\eta_{i+1} = \eta_i\colon
S^0 \to \MJ{i}$; 

\item If $X$ is a finite type $n$ spectrum, then the map $X \to \holim_i(X \smsh \MJ{i})$ induced by the maps
$\eta$ is an equivalence.

\item  If $X$ is any $L_n$-local spectrum then by \cite{hovstrmemoir} we have $L_{\Kn}X \simeq \holim_i X \smsh \MJ{i}$. In 
particular we have $ \E \simeq \holim_i \E \smsh \MJ{i}$.

\end{enumerate} 
Most of this is proved in \cite[\S~4]{hovstrmemoir}, and (6) is proved in \cite[Prop.~7.10]{hovstrmemoir}.
Hovey and Strickland also prove that items (1)-(5) characterize the tower $\{\MJ{i}\}$ up to equivalence
in the pro-category of towers under $S^0$. See Proposition 4.22 of \cite{hovstrmemoir}.
Note that the sequence $\{ J(i)\}$ of ideals defines the same topology on $\E_0$ as the $\mm$-adic topology and
that $\G$ acts on $\E_0/J(i)$ through a finite quotient. 
\end{rem}

For profinite sets $T=\lim_j T_j$ and $A=\lim_i A_i$, recall that the set of continuous maps from $T$ to $A$ is defined as
\begin{equation*}
\Map^c(T,A) = {\lim}_i \colim_j \Map(T_j,A_i).
\end{equation*} 
Let $M$ be a Morava module and always assume $M$ is $\mm$-complete. An important example of the previous construction is the Morava module of continuous maps 
\begin{equation*}
\Map^c(\G,M) = {\lim}_i \Map^c(\G,M/\mm^i) = {\lim}_i \colim_j \Map(\G/U_j,M/\mm^i)
\end{equation*} 
where $U_{j+1} \subseteq U_j \subseteq \G$ is a nested sequence of open normal subgroups so that $\cap\  U_j = \{e\}$; then
$\G = \lim_j \G/U_j$.

We now begin to make these constructions topological by giving a definition of a spectrum of continuous maps in the $\Kn$-local category.
\begin{defn}\label{rem:func-defined}
Suppose  $T = \lim_j T_j$ is a profinite set, and
$A \simeq \holim_i A\smsh \MJ{i}$ is a $\Kn$-local spectrum. Define
\[
\Func(T_+,A) = \holim_i \hocolim_j F( {T_j}_+, A\smsh \MJ{i}). 
\] 
\end{defn}

In applications $T$ will be $\GG$ or $\GG/\K \times \GG^s$ with $s \geq 0$ and $\K \subseteq \GG$ a closed subgroup, or $\GG=\Z_p^{\times}$.

We now calculate $\pi_\ast \Func(T_+,A)$, at least for some $A$. For later applications,
we will need a slightly more general result about $\pi_\ast F(Z,\Func(T_+,A))$ with $Z$ arbitrary. If $Z$ is any spectrum we may write $Z \simeq \hocolim Z^\alpha$ for some filtered collection of finite spectra. If
$A \simeq \holim_i A\smsh \MJ{i}$ is a $\Kn$-local spectrum, then we have a topology on
$\pi_t F(Z,A) = A^{-t}(Z)$ defined by the open system of neighborhoods of zero given by the kernels of the map 
\[
\pi_t F(Z,A) \longrightarrow \pi_tF(Z^\alpha,A \smsh \MJ{i}).
\]
This is the {\it natural topology} of \cite[Section 11]{hovstrmemoir}. The groups $\pi_\ast F(Z,A)$ are complete
in this topology if
\[
\pi_\ast  F(Z,A) \cong \lim_{\alpha,i} \pi_*F(Z^\alpha,A \smsh \MJ{i}).
\]
In applying the following result our main example will be $A=\E\smsh X$ with $X$ dualizable in the $\Kn$-local category.

\begin{lem}\label{lem:ContinuousMaps} 
Suppose $Z$ is any spectrum, $T = \lim_j T_j$ is a profinite set, and
$A \simeq \holim_i A\smsh \MJ{i}$ is a $\Kn$-local spectrum. Further suppose $\pi_t (A \smsh \MJ{i})$ is finite
for all $i$ and $t$. We then have an isomorphism
\[
\pi_* F(Z, \Func(T_+, A)) \cong \Map^c(T, A^{-*}Z)
\]
where $A^{-*}Z$ is equipped with the natural topology.
\end{lem}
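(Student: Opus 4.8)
The plan is to unwind both sides of the claimed isomorphism into limits and colimits of homotopy groups of finite-level mapping spectra, and to match them term by term. Recall $Z \simeq \hocolim_\alpha Z^\alpha$ with each $Z^\alpha$ finite, $T = \lim_j T_j$ with each $T_j$ finite, and $A \simeq \holim_i A \smsh \MJ{i}$. Unravelling \cref{rem:func-defined},
\[
F\bigl(Z, \Func(T_+,A)\bigr) = F\Bigl(\hocolim_\alpha Z^\alpha,\ \holim_i \hocolim_j F({T_j}_+, A \smsh \MJ{i})\Bigr) \simeq \holim_\alpha \holim_i F\bigl(Z^\alpha,\ \hocolim_j F({T_j}_+, A\smsh \MJ{i})\bigr).
\]
For fixed $\alpha$ and $i$, since $Z^\alpha$ is finite it commutes with the filtered homotopy colimit over $j$, and $F({T_j}_+, A\smsh \MJ{i}) \simeq \prod_{T_j}(A \smsh \MJ{i})$ is a finite product; so $F(Z^\alpha, \hocolim_j F({T_j}_+, A\smsl \MJ{i})) \simeq \hocolim_j \prod_{T_j} F(Z^\alpha, A \smsh \MJ{i})$. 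The key finiteness input now enters: because $\pi_t(A \smsh \MJ{i})$ is finite and $Z^\alpha$ is finite, $\pi_t F(Z^\alpha, A \smsh \MJ{i})$ is a finite group, so all $\lim^1$ terms in the relevant Milnor sequences vanish and every homotopy group in sight is a (pro-)finite abelian group. First I would therefore take $\pi_*$ and freely commute it past the homotopy limits and the filtered colimit: $\pi_* F(Z, \Func(T_+,A)) \cong \lim_{\alpha,i} \colim_j \pi_* \prod_{T_j} F(Z^\alpha, A\smsh \MJ{i}) \cong \lim_{\alpha,i} \colim_j \Map(T_j, \pi_*F(Z^\alpha, A \smsh \MJ{i}))$.

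Next I would identify this with $\Map^c(T, A^{-*}Z)$. By definition $A^{-*}Z = \pi_* F(Z,A)$, and the natural topology makes it $\lim_{\alpha,i} \pi_* F(Z^\alpha, A \smsh \MJ{i})$ — here I use the stated hypothesis (or rather the finiteness just established, which forces completeness) that $A^{-*}Z$ is complete in the natural topology, so it genuinely equals this limit of finite groups, presented as a profinite group with $A_i := \pi_* F(Z^\alpha, A\smsh \MJ{i})$ running over the indexing set $(\alpha,i)$. Then by the very definition of continuous maps between profinite sets recalled in the excerpt, $\Map^c(T, A^{-*}Z) = \lim_{(\alpha,i)} \colim_j \Map(T_j, \pi_*F(Z^\alpha, A\smsh\MJ{i}))$, which is exactly the expression reached at the end of the previous paragraph. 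Matching the two gives the isomorphism, and naturality of each step (finiteness-driven commutation of $\pi_*$ with (co)limits, the hom–tensor style adjunctions, and the definition of $\Map^c$) yields that it is natural in $Z$, $T$, and $A$.

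The main obstacle — and the reason the finiteness hypothesis on $\pi_t(A \smsh \MJ{i})$ is imposed — is justifying the interchange of $\pi_*$ with the nested homotopy limits (over both $i$ and $\alpha$) and with the filtered homotopy colimit over $j$. In general $\pi_*$ does not commute with $\holim$; what saves us is that at each finite stage the groups $\pi_* F(Z^\alpha, A \smsh \MJ{i})$ are finite, hence the towers one forms are towers of finite groups, which are automatically Mittag–Leffler, so all derived-limit ($\lim^1$) obstructions vanish and the Milnor sequences collapse to isomorphisms. One must also be slightly careful about the order of the two limits (over $\alpha$ and over $i$) versus the colimit over $j$: since everything is finite one may reorganize the double limit $\lim_\alpha \lim_i$ into a single cofiltered limit and then appeal to the standard fact that a cofiltered limit of finite sets commutes appropriately with the filtered colimit over $j$ in the definition of $\Map^c$ — this is precisely the content built into the definition $\Map^c(T,A) = \lim_i \colim_j \Map(T_j, A_i)$. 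A secondary, more bookkeeping-level point is checking that $F(Z^\alpha, -)$ commutes with the filtered $\hocolim_j$, which holds because $Z^\alpha$ is a finite spectrum (equivalently dualizable), so this step is routine.
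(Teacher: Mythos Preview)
Your proposal is correct and follows essentially the same route as the paper: commute $F(Z^\alpha,-)$ past the filtered $\hocolim_j$ using that $Z^\alpha$ is finite (dualizable), identify the homotopy groups at each finite stage as $\Map(T_j,\pi_*F(Z^\alpha,A\smsh\MJ{i}))$, and then invoke the finiteness hypothesis to collapse the Milnor sequences and match with the definition of $\Map^c$. You are in fact more explicit than the paper about why the various interchanges of $\pi_*$ with limits are legitimate, which is helpful; the only blemish is a typo (\texttt{\textbackslash smsl} for \texttt{\textbackslash smsh}).
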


\begin{proof} Let $Z \simeq \hocolim_{\alpha} Z^\alpha$ be some cellular filtration on $Z$ by finite spectra. Our
finiteness hypothesis on $A$ implies
\[
A^{-*}Z = \pi_*F(Z,A) \cong \lim_{\alpha, i} \pi_* F(Z^\alpha, A\smsh \MJ{i}).
\]
Now we have that
\[
F(Z, \Func(T_+, A)) \simeq \holim_{\alpha} \holim_i F(Z^\alpha, \hocolim_j F({T_j}_+, A\smsh \MJ{i}))
\]
is equivalent to 
\[ 
\holim_{\alpha} \holim_i   \hocolim_j  F(Z^\alpha,F({T_j}_+, A\smsh \MJ{i})),
\]
since $Z^\alpha$ is dualizable. The homotopy groups of 
\[
F(Z^\alpha,F({T_j}_+, A\smsh \MJ{i})) \simeq F({T_j}_+, F(Z^\alpha, A\smsh \MJ{i}))
\]
are $\Map(T_j, \pi_* F(Z^\alpha, A\smsh \MJ{i}))$ and the claim follows using the Milnor sequence and our finiteness hypotheses for the vanishing of the ${\lim}^1$ term.
\end{proof}

\begin{rem}\label{rem:func} For a $\Kn$-local spectrum $X \simeq \holim X \smsh \MJ{i}$, we can give 
\[F((\GG/U_j)_+,X \smsh \MJ{i})\] a left $\GG  = \lim_i \GG/U_i$ action
by operating on the right on the source. (Note that the subgroups $U_j$ are normal.) This assembles into an action on $\Func(\GG_+,X)$. If the homotopy groups $\pi_t (X \smsh \MJ{i})$ are finite, 
\Cref{lem:ContinuousMaps} gives an isomorphism of continuous $\GG$-modules
\begin{equation}\label{eq:alg-func}
\pi_\ast \Func(\GG_+,X) \cong \Map^c (\GG,\pi_\ast X)
\end{equation}
where again $\GG$ acts on the source.

Writing
$\GG^s = \lim (\GG/U_i)^s$ we define $\Func(\GG_+^s,X)$ for $s \geq 1$ as in \Cref{rem:func-defined}. We have that
\[
\Func(\GG_+^s,\Func(\GG_+^t,X)) \simeq \Func(\GG_+^{s+t},X).
\]
The equation $\Func(\GG_+^{s+1},X) \simeq \Func(\GG_+,\Func(\GG_+^s,X))$ defines an action of $\GG$ on $\Func(\GG_+^{s+1},X)$ 
using the right action on the first factor of $\GG^{s+1}$.

Evaluation defines a map $\GG_+ \smsh F((\GG/U_j)_+,X \smsh \MJ{i}) \to X \smsh \MJ{i}$. Here $\GG$ is simply regarded
as a set, with no topology. These fit together to give a map
\[
\GG_+ \smsh \Func(\GG_+,X) \longr X.
\]
\end{rem}

We now come to the Devinatz--Hopkins notion of a continuous $\GG$--action on a $\Kn$-local spectrum. To prepare,
we spend a few paragraphs examining the standard bar construction in equivariant homotopy theory.

Let $G$ be a discrete group and $X$ a $G$-space. Then we can form the augmented cosimplicial space
\begin{equation}\label{eq:basic-cosimpl}
X \longrightarrow \map(G^{\bullet + 1},X)
\end{equation}
with coface maps defined by
\[
(d^i\phi)(g_0,g_1,\ldots,g_s) =
\begin{cases}
g_0\phi(g_1,\ldots,g_s),& s=0;\\
\phi(g_0,\ldots,g_ig_{i+1},g_s),& s \geq 1.
\end{cases}
\]
The codegeneracy maps $s^i$ insert the unit in the $i$th slot and the augmentation $\eta\colon X \to \map(G,X)$ 
is adjoint to the action map $G \times X \to X$.  Notice that $s^i$ for all $i$, and $d^i$ for all 
$i \geq 1$ depend only on $G$, and not on the action of $G$ on $X$. However, 
for all $s$, $d^0$ is given by the composition
\[
\xymatrix{ 
\map(G^s,X) \ar[rr]^-{\map(G^s,\eta)} && \map(G^s,\map(G,X)) \ar[r]^-\cong & \map(G^{s+1},X),
}
\]
where we have used the adjoint isomorphism $\map(Y,\map(G,X)) \cong \map(G \times Y,X)$. 

We could turn these observations around and {\it define} a $G$-action on $X$ as a map $\eta\colon X \to \map(G,X)$ 
so that the diagram \eqref{eq:basic-cosimpl} determined by these formulas is an augmented
cosimplicial space; that is, the various compositions satisfy the cosimplicial identities. We find that this is 
equivalent to the usual definition. 

There is nothing special about spaces in this discussion:for example, an action of $G$ on a spectrum $X$ defines
and is defined by an augmented cosimplicial spectrum
\begin{equation*}
X \longrightarrow F(G_+^{\bullet + 1},X).
\end{equation*}
In our definition of a continuous $\GG$--spectrum, which again is essentially due to Devinatz--Hopkins \cite{DH}, we
replace the functors $ F(G_+^{s + 1},-)$ by the functors $\Func(\GG^{s+1}_+,-)$.

\begin{defn}[{\bf Continuous $\GG$-actions}]\label{def:cont-g-spec}
Let $X$ be a $\Kn$-local spectrum. A \emph{continuous} $\GG$--action on $X$ consists of a map
\[
\eta = \eta_X\colon X \to \Func(\GG_+,X)
\]
so that the diagram
\begin{equation}\label{eq:cosim-g}
X \longr \Func(\GG^{\bullet + 1}_+,X)
\end{equation}
determined by $\eta$ and $\GG$ is an augmented cosimplicial spectrum. 

A map of continuous $\G$--spectra consists of a map of the respective augmented cosimplicial diagrams.
\end{defn} 

\begin{rem}\label{rem:logic} If $X$ has a $\GG$-action, then the composition
\begin{equation}\label{eq:refined}
\xymatrix{
X \ar[r]^-\eta & \Func(\GG_+,X) \ar[r] & F(\GG_+,X)
}
\end{equation}
defines an action (in the usual sense) of $\GG$ on $X$. Conversely given an action of $\GG$ on $X$, we say
that action refines to a continuous action, or simply that the action is continuous, if there is a factoring as in  
\eqref{eq:refined} that gives $X$ the structure of a continuous $\GG$-spectrum. 

This is what Devinatz--Hopkins \cite{DH} accomplish, where the discrete $\G$--action on $\E$ was already given by the 
Goerss--Hopkins--Miller theorem. We discuss this example further in \cref{rem:unpack-DH} below.
\end{rem}

\begin{example}\label{ex:trivial}
A more tautological example is the following: For any $\Kn$-local spectrum $X$, the trivial action of $\G$ on $X$ is continuous. Here we start with $\eta\colon X\to \Func(\G_+, X) $ adjoint to the projection map $\G_+ \smsh X \to X$.
\end{example}

\begin{defn}[{\bf Homotopy fixed points}] 
If $X$ is a continuous $\GG$--spectrum and $\K \subseteq \GG$ is a closed subgroup, we define
$\Func(\G_+,X)^\K = \Func(\GG/\K_+,X)$ and
\begin{align}\label{eq:fixed-pnts}
X^{h\K} &= \holim_\Delta \Func(\GG_+^{\bullet + 1},X)^\K\\
& \simeq \holim_\Delta \Func(\GG/\K_+ \wedge \GG_+^{\bullet},X).\nonumber
\end{align}
\end{defn}

\begin{rem}\label{rem:HFPSS} Suppose further that $\K \subseteq \G$ is a closed subgroup and 
that $X \simeq \holim_i X\smsh \MJ{i}$ is a $\Kn$-local spectrum such that $\pi_t (X \smsh \MJ{i})$ is finite
for all $i$ and $t$. Using \Cref{lem:ContinuousMaps}, one sees that these definitions are designed
so that the Bousfield--Kan spectral sequence associated to \eqref{eq:fixed-pnts} is the homotopy fixed point spectral sequence
\[
E_2^{s,t} \cong H_c^s(\K,\pi_tX) \Longrightarrow \pi_{t-s} X^{h\K}
\]
with $E_2$-term given by the continuous group cohomology. 
\end{rem}

\begin{rem}\label{rem:otherG}
There is an obvious generalization of this definition to other settings, for example the group may be any profinite group. Likewise, the spectrum $X$ may live in another category where analogues of the generalized Moore spectra $\MJ{i}$ play a similar role. For example $X$ may be a $p$-complete spectrum, so $X \simeq \holim_i X \smsh S/p^i$. While we will in effect construct a continuous $p$-complete $\Z_p^\times$ spectrum in this sense in \Cref{sec:TateDet}, we refrain from setting up a general theory.
\end{rem}

The following is an easy but useful property, which we record as a lemma for convenient future reference.
\begin{lem}\label{lem:inclusion-of-fixed}
Let $X$ be a continuous $\G$--spectrum. If $X^{h\G}$ is given the trivial $\G$--action, the ``inclusion of fixed points" map $X^{h\G} \to X$ is $\G$-equivariant.
\end{lem}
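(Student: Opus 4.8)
The plan is to unwind the definition of a map of continuous $\G$-spectra and exhibit the inclusion of fixed points as such a map. By \eqref{eq:fixed-pnts}, $X^{h\G} = \holim_\Delta \Func(\G^{\bullet+1}_+,X)^\G$ is the totalization of a cosimplicial spectrum whose codegree-$0$ term is $\Func(\G_+,X)^\G = X$, and the inclusion of fixed points $\iota\colon X^{h\G}\to X$ is the canonical map from the totalization to this term. Since the trivial $\G$-action on $X^{h\G}$ is continuous (\cref{ex:trivial}), by \cref{def:cont-g-spec} the lemma amounts to promoting $\iota$ to a map of augmented cosimplicial diagrams from $\big[X^{h\G}\to \Func(\G^{\bullet+1}_+,X^{h\G})\big]$, equipped with the trivial-action coface maps, to $\big[X\to \Func(\G^{\bullet+1}_+,X)\big]$, equipped with the prescribed ones.

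The subtlety is that one may not take the codegree-$s$ component to be $\Func(\G^{s+1}_+,\iota)$: this choice fails to intertwine the top coface maps, which are built from $\eta_X$ on the target side and from the constant map on the source side --- a failure reflecting the fact that the underlying point of a homotopy fixed point is $\G$-fixed only up to a homotopy one has to carry along. Instead I would use that $X^{h\G}$ is itself a homotopy limit. Because $\Func(\G^{s+1}_+,-)$ commutes with totalizations, there are identifications, natural in $[s]$,
\[ \Func(\G^{s+1}_+,X^{h\G})\ \simeq\ \holim_{[t]\in\Delta}\Func\big(\G^{s+1}_+,\Func(\G^{t+1}_+,X)^\G\big), \]
presenting the source diagram as the totalization, over an auxiliary cosimplicial variable $[t]$, of a bicosimplicial spectrum assembled from the cosimplicial spectrum $\Func(\G^{\bullet+1}_+,X)$ that encodes the action on $X$. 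That cosimplicial spectrum, together with its augmentation $X$, is a \emph{split} augmented cosimplicial spectrum --- the extra codegeneracies coming from the standard contraction of the simplicial object $\G^{\bullet+1}$ --- so its totalization is canonically its augmentation, naturally in the remaining variable; feeding this splitting into the totalization over $[t]$ yields the required map of augmented diagrams, which restricts to $\iota$ in codegree $-1$. (Conceptually this is just the counit of the adjunction between the trivial-action functor and homotopy fixed points, made explicit; a useful check on the bookkeeping.)

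The main obstacle is exactly that bookkeeping: arranging the bicosimplicial spectrum so that its totalization over $[t]$ reproduces the trivial-action cobar diagram of $X^{h\G}$ with the correct coface maps, and checking that the splitting is compatible with the remaining coface and codegeneracy maps. The latter are induced by the face and degeneracy maps of the bar construction on $\G$, hence are natural in the spectrum variable, so once the $\eta$-type coface maps are handled those compatibilities are automatic. With \cref{lem:ContinuousMaps} and the explicit cobar formulas in hand, this is a routine if somewhat tedious verification --- which is why we may call the lemma easy.
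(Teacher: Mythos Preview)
Your approach is correct, but it takes a considerably more elaborate route than the paper's. You interpret ``$\G$-equivariant'' strictly via the last line of \cref{def:cont-g-spec} --- a map of the defining augmented cosimplicial diagrams $\Func(\G^{\bullet+1}_+,X^{h\G}) \to \Func(\G^{\bullet+1}_+,X)$ --- and then correctly observe that the naive choice $\Func(\G^{s+1}_+,\iota)$ fails to intertwine the $\eta$-type cofaces, forcing you into a bicosimplicial argument using the split augmentation.

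The paper sidesteps this entirely. Rather than working with the augmented diagram attached to $X^{h\G}$, it realizes $\iota$ directly as $\holim_\Delta$ of the levelwise inclusion
\[
\Func(\G^{\bullet}_+,X)\ \simeq\ \Func(\G^{\bullet+1}_+,X)^{\G}\ \hookrightarrow\ \Func(\G^{\bullet+1}_+,X).
\]
Each term on the right carries the $\G$-action on the first $\G$-factor, the cosimplicial structure maps are $\G$-equivariant for this action, and the levelwise inclusion of fixed points is tautologically $\G$-equivariant (trivial action on the source). Since the augmentation $X \to \Func(\G_+,X)$ is itself $\G$-equivariant, the identification $X \simeq \holim_\Delta \Func(\G^{\bullet+1}_+,X)$ is too, and the result follows in one line. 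The $\eta$-coface obstruction you worry about simply does not arise in this presentation, because the paper never attempts to compare the two \emph{defining} cobar resolutions.

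What your approach buys is an explicit witness to the map-of-augmented-diagrams structure demanded by \cref{def:cont-g-spec}; the paper's argument is shorter but leaves that translation implicit. For the application in \cref{thm:pretty-damn-cool}, the paper's level of precision suffices.
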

\begin{proof}
The map in question is $\holim_\Delta $ of the cosimplicial map 
\[
\Func(\G^{\bullet}_+,X) \simeq \Func(\G^{\bullet+1}_+,X)^{\G} \longrightarrow \Func(\G_+^{\bullet+1},X), 
\]
given by the inclusion of fixed points, which by construction has the required properties.
\end{proof}

One way to summarize the results of Devinatz and Hopkins \cite{DH} is as follows. The phrase ``essentially unique"
means the space of choices is contractible.

\begin{thm}\label{thm:DH-fixed-pts} The $\GG$-spectrum $\E$ has an essentially unique structure as a continuous 
$\GG$-spectrum with the property that if $\K \subseteq \GG$ is closed, then the map of Morava modules
$\E_\ast \E^{h\K} \to \E_\ast \E$ is naturally isomorphic to the inclusion
\[
\Map^c(\GG/\K,\E_\ast) \longr \Map^c(\GG,\E_\ast).
\]
\end{thm}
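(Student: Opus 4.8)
The plan is to take the Devinatz--Hopkins homotopy fixed point spectra $\E^{h\K}$ as constructed in \cite{DH}, package their defining cosimplicial resolutions into the language of \cref{def:cont-g-spec}, and then read off the Morava module statement from the descent/comparison computations that are already in \cite{DH}. Concretely, I would first recall that for an open subgroup $U \subseteq \GG$, Devinatz and Hopkins define $\E^{hU}$ via an explicit finite(-stage) construction, and for a general closed $\K = \bigcap U_j$ they set $\E^{h\K} = L_{\Kn}\hocolim_j \E^{hU_j}$ (or the appropriate (co)limit); the key input is that these assemble into the cosimplicial spectrum $\Func(\GG_+^{\bullet+1},\E)$ whose totalization in cohomological degree filtered over $\K$-cosets computes $\E^{h\K}$. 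The existence of the continuous structure on $\E$ is then precisely the statement that the Goerss--Hopkins--Miller discrete action lifts along the maps $\Func(\GG_+^{\bullet+1},\E) \to F(\GG_+^{\bullet+1},\E)$ of \cref{rem:logic}; I would cite \cite{DH} for this lift and note that \cref{rem:func-defined} is built exactly so that $\Func(\GG_+^{\bullet+1},\E)$ matches their mapping spectra.

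Next I would establish the Morava module computation. Apply $\E_*(-) = \pi_* L_{\Kn}(\E \smsh -)$ to the cosimplicial resolution defining $\E^{h\K}$. Using \cref{lem:ContinuousMaps} with $Z = S^0$ (and the finiteness hypothesis $\pi_t(\E \smsh \MJ{i})$ finite, which holds since $\E_0/J(i)$ is finite), we get $\E_* \Func(\GG/\K_+ \times \GG^s_+, \E) \cong \Map^c(\GG/\K \times \GG^s, \E_*)$. The associated Bousfield--Kan spectral sequence (\cref{rem:HFPSS}) is the homotopy fixed point spectral sequence $H^s_c(\K, \pi_t \E) \Rightarrow \pi_{t-s}\E^{h\K}$ smashed with $\E$; the cosimplicial $\E_*$-module is $s \mapsto \Map^c(\GG \times \GG^s, \E_*)$, i.e., the standard (inhomogeneous) cochain complex computing $H^*_c(\GG, \Map^c(\GG/\K, \E_*))$. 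Since $\Map^c(\GG/\K, \E_*) \cong \Map^c(\GG, \E_*)^{\K}$ is a coinduced, hence cohomologically trivial, $\GG$-module in positive degrees with $H^0 = \Map^c(\GG/\K,\E_*)$, the spectral sequence collapses and $\E_*\E^{h\K} \cong \Map^c(\GG/\K,\E_*)$, functorially in $\K$. The comparison map $\E_*\E^{h\K} \to \E_*\E$ is induced by the inclusion-of-fixed-points map $\E^{h\K} \to \E$ of \cref{lem:inclusion-of-fixed}, and on the level of cosimplicial resolutions this is the inclusion $\Func(\GG/\K_+ \times \GG^\bullet_+, \E) \to \Func(\GG^{\bullet+1}_+, \E)$, which on $\E_*$ and after passing to the collapsed $E_\infty$-page is exactly the inclusion $\Map^c(\GG/\K, \E_*) \hookrightarrow \Map^c(\GG, \E_*)$.

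For essential uniqueness, I would argue as in \cite{DH}: the space of continuous $\GG$-structures refining the given discrete action on $\E$ is the space of lifts in the diagram of \cref{rem:logic}, and the obstruction theory for these lifts (the relevant homotopy groups of the fibers of $\Func \to F$) vanishes because of the same finiteness and $\Kn$-local convergence properties, so the space of choices is contractible; the characterization via the Morava module identification then pins down the structure uniquely up to a contractible space of choices. Alternatively, and more in the spirit of the paper, one can simply quote \cite{DH} directly for both the existence/uniqueness of the continuous structure and the identification of $\E_*\E^{h\K}$ as $\Map^c(\GG/\K,\E_*)$, and the content of this theorem is then the repackaging of those results into the present formalism.

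The main obstacle, in my view, is not any single hard computation but the bookkeeping: one must check carefully that the cosimplicial spectrum $\Func(\GG^{\bullet+1}_+,\E)$ of \cref{def:cont-g-spec} really is (levelwise equivalent to, and compatibly with coface/codegeneracy maps) the one Devinatz and Hopkins use to define $\E^{h\K}$, so that their theorems apply verbatim; once that dictionary is in place, the Morava module statement is a formal consequence of \cref{lem:ContinuousMaps} and the coinduced-module collapse of the descent spectral sequence. A secondary subtlety is ensuring the natural topology on $\E_*\E = \Map^c(\GG,\E_*)$ used here agrees with the one in \cite{DH}, which again follows from \cref{lem:ContinuousMaps} and \cite[Section 11]{hovstrmemoir}.
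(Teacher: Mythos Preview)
The paper does not give a self-contained proof of this theorem; it is stated as a summary of the results of \cite{DH}, and \cref{rem:unpack-DH} immediately afterward unpacks the logic of the Devinatz--Hopkins argument. Your final suggestion---simply quote \cite{DH} and view the theorem as a repackaging into the present formalism---is in fact exactly what the paper does.

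Your more detailed sketch, however, has a real gap in the Morava module computation. You propose to ``apply $\E_*(-)$ to the cosimplicial resolution defining $\E^{h\K}$'' and then invoke \cref{lem:ContinuousMaps} with $Z=S^0$. But \cref{lem:ContinuousMaps} with $Z=S^0$ only identifies $\pi_*$ of the mapping spectra, not $\E_*$: it gives $\pi_*\Func(\GG/\K_+ \times \GG^s_+,\E) \cong \Map^c(\GG/\K \times \GG^s, \E_*)$, which feeds the Bousfield--Kan spectral sequence for $\pi_*\E^{h\K}$, not for $\E_*\E^{h\K}$. To get at $\E_*\E^{h\K}$ you must first know that $L_{\Kn}(\E \smsh -)$ commutes with the totalization defining $\E^{h\K}$, and that is a nontrivial convergence statement. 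This is precisely the content of \cref{prop:DHSS}, which \cref{rem:unpack-DH} explicitly flags as the ``remarkable'' ingredient needed for the calculation of $\E_*\E^{h\K}$; once that commutation is in hand, you are in the situation of \cref{prop:CommuteHFP} and \cref{cor:ActMoravaModule} with $X=S^0$, and the coinduced-module collapse you describe goes through. Your proposed route to essential uniqueness via obstruction theory on the lifts of \cref{rem:logic} is also not what is done: as \cref{rem:unpack-DH} explains, uniqueness comes from the Goerss--Hopkins--Miller theorem, which makes the $E_\infty$ $\GG$-action on $\E$---and hence the sequence $\E^{hU_j}$ and the diagram \eqref{eq:define-sq} defining the continuous structure---essentially unique.
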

The Morava modules $\E_\ast \E^{h\K}$ and $\E_\ast \E$ are discussed in more details immediately after \Cref{rem:unpack-DH}.

\begin{rem}\label{rem:unpack-DH} The statement of \Cref{thm:DH-fixed-pts}  at once disguises quite a bit of difficult
work and obscures the logic of the Devinatz--Hopkins argument; thus, it is surely worth going into a bit of detail.

Suppose for a moment that we knew that \Cref{thm:DH-fixed-pts} was true. As above, choose a nested sequence of
open normal subgroups $U_{j+1} \subseteq U_j \subseteq \GG$ with $\cap\ U_j = \{e\}$. Then we would have a sequence
of spectra
\begin{equation}\label{eq:nested-fp}
\cdots \longr \E^{hU_{j}} \longr \E^{hU_{j+1}}  \longr\cdots \longr\E
\end{equation}
with the following properties
\begin{enumerate}

\item $\E^{hU_j}$ is a $\GG/U_j$ spectrum and all the maps of \eqref{eq:nested-fp} are $\GG$-equivariant;

\item the map $\E_\ast \E^{hU_{j}} \longr \E_\ast \E$ of Morava modules is isomorphic to the inclusion
\[
\Map^c(\GG/U_j,\E_\ast) \longr \Map^c(\GG,\E_\ast);
\] 

\item the induced map $\hocolim_j \E^{hU_j} \to \E$ is a $\Kn$-local equivalence.
\end{enumerate}

Let us give some detail on Part (3). By \Cref{rem:compl-tower}, Part (6) we have that if
 $X$ is $L_n$-local then $L_{\Kn}X = \holim X \smsh \MJ{i}$.
The spectra $\E^{hU_j}$ are $\Kn$-local and, hence $L_n$-local. Since $L_n$ is smashing the homotopy colimit is $L_n$-local,
so Part (3) is equivalent to the statement that
\[
\hocolim_j \E^{hU_j} \smsh \MJ{i} \longr \E \smsh \MJ{i}
\]
is an equivalence for all $i$. This follows from (2) and the fact that $\cap\ U_j = \{e\}$. 

Next observe that since $\GG/U_j$ is finite, $\E_\ast \E^{hU_{j}}$ is finitely generated as an $\E_\ast$-
module, hence $\E^{hU_{j}}$ is dualizable in the $\Kn$-local category, by \cite[Thm. 8.6]{hovstrmemoir}.
Putting all this together -- and still
assuming we know \Cref{thm:DH-fixed-pts} --  we would have
the following diagram of cosimplicial spectra, with the vertical maps being $\Kn$-local equivalences
\begin{equation}\label{eq:define-sq}
\xymatrix{
\hocolim_j \E^{hU_j} \rto \dto_\simeq & \hocolim_j F((\GG/U_j)_+^{\bullet+1},\E^{hU_j}) \dto^\simeq \\
\E \rto &\Func(\GG_+^{\bullet+1},\E).
}
\end{equation}

Devinatz and Hopkins prove \Cref{thm:DH-fixed-pts} by reversing the logical order of this discussion: Recall that the Goerss--Hopkins--Miller theorem provides $\E$ with an essentially unique structure as an $E_\infty$-ring spectrum,
the space $\map_{E_\infty}(\E,\E)$ has contractible components, and $\pi_0\map_{E_\infty}(\E,\E)\cong \GG$. This
gives $\E$ an essentially unique structure as $\GG$-spectrum, with the action through $E_\infty$-ring maps. 
 
Using the Goerss--Hopkins--Miller Theorem, Devinatz and Hopkins define a sequence of spectra which they call $\E^{hU_j}$ and
maps as in \eqref{eq:nested-fp} satisfying Parts (1)--(3) above. They then define the continuous $\GG$-structure on $\E$
using the diagram of \eqref{eq:define-sq}. Then they must justify the notation $\E^{hU_j}$; that is, they must show
the spectra defined this way agree, up to equivalence, with the fixed points as defined in \eqref{eq:fixed-pnts}.
Finally,  they must calculate $\E_\ast \E^{h\K}$. For this they use the remarkable \Cref{prop:DHSS} below.
\end{rem}

We further unpack the statement of \Cref{thm:DH-fixed-pts} and generalize it (\Cref{prop:CommuteHFP} and \Cref{cor:ActMoravaModule}). 
For any $X$, 
\[
\E_\ast (\E \smsh X) = \pi_\ast L_{\Kn}(\E \smsh \E \smsh X)
\]
is a Morava module, using the action of $\GG$ on the left factor
$\E$. Now, suppose $X$ itself has a $\GG$-action so that the diagonal action on $\E \smsh X$ is continuous. If $h \in \GG$ and $x \in \E_\ast X$, then we write $h\ast_d x$ for this action. The adjoint of the diagonal action of $\GG$ on $\E \smsh X$ gives rise to a map
\begin{equation}\label{eq:fund-iso}
\eta\colon \E_\ast (\E \smsh X) \longr \Map^c(\GG,\E_\ast X).
\end{equation}
Explicitly, if $x\colon S^t \to \E \smsh \E \smsh X$ and $g \in \GG$, then $\eta_x(g)$ is the composite
\[
\xymatrix{
S^t \rto^-x & \E \smsh \E \smsh X \rto^-{1 \smsh g \smsh g} & \E \smsh \E \smsh X \rto^-{\mu \smsh 1} & \E \smsh X,
}
\]
where $\mu$ is multiplication and we have suppressed the $\Kn$-localizations.

If $X$ is $S^0$ with the trivial action, then $\eta$ gives the identification of $\E_*\E$ with $\Map^c(\G, \E_*)$ which appeared in \Cref{thm:DH-fixed-pts}. The following result covers every case that arises in this note. 
\begin{lem}\label{lem:etaiso} Suppose $X = Y \wedge Z$ where $\Kn_\ast Y$ is zero in odd degrees and $Z$ is a $\Kn$-locally dualizable spectrum. Then the map $\eta$ in \eqref{eq:fund-iso} is an isomorphism. 
\end{lem}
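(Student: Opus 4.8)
The plan is to reduce to the known case $X = S^0$ (i.e.\ $\E_*\E \cong \Map^c(\G,\E_*)$, which is \cite{DH} and appears in \Cref{thm:DH-fixed-pts}) by a two-step dévissage: first handle a dualizable $Z$, then handle a $Y$ with $\Kn_*Y$ concentrated in even degrees, and finally combine. Throughout I would suppress $L_{\Kn}$ and use freely that $\E_*(\E \smsh W) = \pi_* L_{\Kn}(\E \smsh \E \smsh W)$.

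\emph{Step 1: $X = Z$ dualizable.} Write $D_n Z$ for the $\Kn$-local Spanier--Whitehead dual. Since $Z$ is dualizable, $\E \smsh \E \smsh Z \simeq F(D_n Z, \E \smsh \E)$, so $\E_*(\E \smsh Z) \cong (\E_*\E)^{-*}(D_nZ)$, and similarly $\E_* Z \cong \E^{-*}(D_nZ)$. Under these identifications the map $\eta$ of \eqref{eq:fund-iso} becomes the natural map
\[
(\E_*\E)^{-*}(D_nZ) \longrightarrow \Map^c\!\big(\G, \E^{-*}(D_nZ)\big),
\]
induced by the coefficient isomorphism $\E_*\E \cong \Map^c(\G,\E_*)$ together with the universal-coefficient identification of $(\Map^c(\G,\E_*))$-cohomology of a finite complex. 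Concretely, for $D_nZ$ a \emph{finite} complex this is immediate by induction on cells from the case $D_nZ = S^0$ (both sides turn cofiber sequences into long exact sequences, and $\Map^c(\G,-)$ is exact on the relevant profinite modules because $\G$ has finite cohomological-type quotients on each $\MJ{i}$-level); for general dualizable $Z$ one passes to the limit over the $\MJ{i}$-tower, using that $\pi_t(\E \smsh \MJ{i})$ is finite so that the $\Map^c$ on the right is computed as $\lim_i \colim_j \Map(\G/U_j, -)$ and the relevant $\lim^1$-terms vanish, exactly as in the proof of \Cref{lem:ContinuousMaps}. This is really just a repackaging of \Cref{thm:DH-fixed-pts}.

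\emph{Step 2: $X = Y$ with $\Kn_*Y$ even, then the general case.} For such $Y$, the $\Kn$-local $\E$-based Adams spectral sequence (equivalently, the fact that $\E_*(\E\smsh Y)$ is a flat $\E_*$-module concentrated in even degrees, built from $\Kn_*Y$ even) gives $\E_*(\E \smsh Y) \cong \E_*\E \,\widehat{\otimes}_{\E_*}\, \E_* Y$, the completed tensor product; this is a standard consequence of the even-degree hypothesis via the Künneth/universal-coefficient machinery of \cite[\S 8]{hovstrmemoir}. Tracing through definitions, $\eta$ is then identified with the isomorphism $\Map^c(\G,\E_*)\,\widehat{\otimes}_{\E_*}\,\E_*Y \cong \Map^c(\G, \E_*Y)$, which holds because $\E_*Y$ is, levelwise over the $\mm$-adic tower, a finite free $\E_*/\mm^i$-module (using $\Kn_*Y$ even so that $\E_*Y$ is pro-free) and $\Map^c(\G,-)$ commutes with finite direct sums. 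For the general $X = Y \wedge Z$: smashing with the dualizable $Z$ is exact and commutes with everything in sight, so one factors $\eta_X$ as the composite of the base-change of $\eta_Y$ (an isomorphism by the $Y$-case, and preserved by $F(D_nZ,-)$) with $\eta_Z$ (an isomorphism by Step~1); alternatively run the cell induction of Step~1 on $D_nZ$ with coefficients in the spectrum $\E \smsh Y$ in place of $\E$, which is legitimate since $\pi_t(\E \smsh Y \smsh \MJ{i})$ is still finite (it is a finite $\E_*/\mm^i$-module by the even-degree hypothesis on $Y$).

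\emph{Main obstacle.} The genuinely delicate point is the interchange of completions and (co)limits: one must know that $\E_*(\E\smsh X)$ is actually \emph{complete} for the natural topology and that $\Map^c(\G,-)$ is being applied to pro-objects on which it is exact — i.e.\ that the $\lim^1$-terms in the Milnor sequences vanish and that the completed tensor product $\E_*\E\,\widehat{\otimes}_{\E_*}\,\E_*X$ genuinely computes $\E_*(\E\smsh X)$ rather than something merely receiving a map from it. The even-degree hypothesis on $Y$ is exactly what makes $\E_*X$ pro-free (so no higher $\Tor$, no $\lim^1$) and the dualizability of $Z$ is exactly what lets finite-cell induction run; once both are in place the identification of $\eta$ with the algebraic iso is bookkeeping. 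I would lean on \Cref{lem:ContinuousMaps} and \cite[\S\S 8, 11]{hovstrmemoir} to discharge these finiteness and completeness issues rather than reproving them.
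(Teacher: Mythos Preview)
Your approach is essentially correct but takes a genuinely different route from the paper. The paper's proof is a single, direct $\lim^1$-vanishing argument: it observes (citing \cite[Prop.~2.4]{ghmr}) that $\eta$ is an isomorphism as soon as the Milnor map $\E_*(\E\smsh X)\to \lim_i \E_*(\E\smsh \MJ{i}\smsh X)$ is an isomorphism, i.e.\ as soon as $\lim_i^1 \E_*(\E\smsh \MJ{i}\smsh X)=0$. Then the evenness of $\Kn_*Y$ gives flatness of $\E_*Y$, hence a K\"unneth splitting $\E_*(\E\smsh \MJ{i}\smsh X)\cong \E_*\E\otimes_{\E_*}\E_*Y\otimes_{\E_*}\E_*(\MJ{i}\smsh Z)$; dualizability of $Z$ makes $\E_*(\MJ{i}\smsh Z)$ finite, so the tower is Mittag--Leffler and the $\lim^1$ vanishes. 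That's the whole proof.

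Your two-step d\'evissage (reduce to $X=S^0$ via a dualizable-$Z$ step and an even-$Y$ step) ultimately rests on the same ingredients---flatness/pro-freeness of $\E_*Y$ and finiteness coming from dualizability of $Z$---but packages them less efficiently. Two small slips worth flagging: in Step~2 you assert that $\E_*Y/\mm^i$ is a \emph{finite} free $\E_*/\mm^i$-module, but evenness of $\Kn_*Y$ only gives pro-freeness, not finite generation; and correspondingly ``$\Map^c(\G,-)$ commutes with finite direct sums'' is not enough justification. The argument still goes through because at each level $i$ the functor $\colim_j\Map(\G/U_j,-)$ commutes with \emph{arbitrary} direct sums (each $\G/U_j$ is finite), so the levelwise identification holds for free modules of any rank before passing to the limit. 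The paper's framing sidesteps this by never needing to split the completed tensor product---it only needs Mittag--Leffler, which the finiteness of $\E_*(\MJ{i}\smsh Z)$ supplies immediately.
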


\begin{proof}
As in the proof of \cite[Prop.~2.4]{ghmr}, it suffices to show that the natural map
\[
\E_*(\E \smsh X) \longrightarrow \lim_i\E_*(\E \smsh \MJ{i} \smsh X)
\]
occurring in the Milnor sequence associated to $\holim_i\E \smsh \E \smsh X \smsh \MJ{i}$ is an isomorphism, i.e., that $\lim_i^1\E_{*}(\E \smsh \MJ{i} \smsh X) = 0$. The assumption on $Y$ implies that $\E_*(Y)$ is a flat $\E_*$-module, so there is an isomorphism
\[
\E_{*}(\E \smsh \MJ{i} \smsh X) \cong \E_*\E\otimes_{\E_*} \E_*(Y) \otimes_{\E_*} \E_*(\MJ{i} \smsh Z).
\]
Since $Z$ is dualizable, $\MJ{i} \smsh Z$ is $\Kn$-locally compact, hence $\E_*(\MJ{i} \smsh Z)$ is finite. This shows that the tower $(\E_*(\MJ{i} \smsh Z))_i$ is Mittag-Leffler, which implies that the required $\lim^1$ vanishes. 
\end{proof}

\begin{rem}\label{rem:all-the-actions} We now have (at least) two actions to keep straight.
\begin{enumerate}
\item For the Morava module structure on  $\E_\ast (\E \smsh X)$ 
the isomorphism $\eta$ becomes $\GG$-equivariant if we give the
module of functions the conjugation action
\[
(h\phi)(g) = h\ast_d \phi(h^{-1}g).
\]

\item The diagonal action on $\E \smsh X$ gives an action of $\GG$ on $\E_\ast (\E \smsh X)$; this involves
the right factor of $\E$. With respect to this action $\eta$ becomes $\GG$-equivariant if we give the
module of functions the action
\[
(h\star \phi)(g) = \phi(gh).
\]
\end{enumerate}
Note that the two actions commute. 
\end{rem}

At this point, we need the following remarkable result due to Devinatz and Hopkins. 

\begin{prop}\label{prop:DHSS} Let $W^\bullet$ be a cosimplicial spectrum. Suppose there exists 
an integer $N$ and a finite type $0$ spectrum $Y$ so that for all spectra $Z$  the Bousfield--Kan spectral sequence
\[
\pi^s \pi_t F(Z, Y\smsh W^\bullet ) \Longrightarrow \pi_{t-s}F(Z, \holim_{\Delta} (Y \smsh W^\bullet) )
\]
has a horizontal vanishing line of intercept $s=N$ at the $E_\infty$-page. Then for any spectra $A $ and $F$ and maps $v\colon \Sigma^k A\to A$,
there is an equivalence
\[
v^{-1}L_F(A \smsh \holim_{\Delta} W^\bullet) \simeq \holim_{\Delta} (v^{-1} L_F (A \smsh W^\bullet)). 
\]
\end{prop}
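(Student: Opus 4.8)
The plan is to reduce the claim about the commutation of the totalization with $f^{-1}L_F(A \smsh -)$ to a uniform statement about the rate of convergence of the relevant Bousfield--Kan spectral sequences, and then to invoke a standard "uniform horizontal vanishing line $\Rightarrow$ finite totalization" argument. First I would recall the basic mechanism: for a cosimplicial spectrum $V^\bullet$ the totalization $\holim_\Delta V^\bullet$ is the inverse limit of the partial totalizations $\Tot_m V^\bullet$, and the fibers of $\Tot_m \to \Tot_{m-1}$ are built from the $m$-th (co)latching data shifted down by $m$. A uniform horizontal vanishing line of intercept $N$ at $E_\infty$ in the relevant spectral sequences says precisely that $\Tot_m \to \Tot_{m-1}$ is, after applying $F(Z,Y\smsh-)$, an isomorphism on $\pi_t$ for $t$ in a range that grows with $m$; hence $\holim_\Delta \to \Tot_m$ is already highly connected for $m$ large, independently of $Z$. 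The point of the hypothesis being phrased for \emph{all} $Z$ is exactly that we then get this statement functorially, and in particular may take $Z$ to be a finite spectrum in a cellular filtration of any given spectrum.

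Next I would exploit this to show that $Y \smsh \holim_\Delta W^\bullet \to Y\smsh \Tot_m W^\bullet$ is an equivalence through a range tending to infinity with $m$; smashing with the type $0$ spectrum $Y$ is harmless here since $Y$ is finite, so $Y \smsh \holim_\Delta W^\bullet \simeq \holim_\Delta (Y\smsh W^\bullet)$ and similarly for $\Tot_m$. The key upgrade is then to remove $Y$: since $Y$ is of type $0$, it generates the whole category under (co)fiber sequences and retracts in the appropriate sense — more precisely one uses that a map of spectra which becomes a $c$-equivalence after smashing with a fixed finite type $0$ spectrum is itself a $(c-d)$-equivalence for a fixed constant $d$ depending only on $Y$ (the cells of $DY \smsh Y$, or an Adams-type argument). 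Applying this to the map $\holim_\Delta W^\bullet \to \Tot_m W^\bullet$ shows it is highly connected for $m \gg 0$. Therefore $A \smsh \holim_\Delta W^\bullet \to A \smsh \Tot_m W^\bullet$ is highly connected, the connectivity growing with $m$; the functor $f^{-1}L_F(-)$ preserves such "eventually highly connected" towers of equivalences in the sense that it carries $\holim_\Delta W^\bullet = \lim_m \Tot_m W^\bullet$ to $\lim_m f^{-1}L_F(A\smsh \Tot_m W^\bullet)$, because localization commutes with the relevant (pro-)constant tower and telescopes commute with these sequential limits once the tower is eventually pro-constant modulo arbitrarily connected maps. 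Finally, since each $\Tot_m W^\bullet$ is a finite homotopy limit, $f^{-1}L_F(A \smsh \Tot_m W^\bullet) \simeq \Tot_m (f^{-1}L_F(A\smsh W^\bullet))$, and assembling over $m$ gives $f^{-1}L_F(A\smsh \holim_\Delta W^\bullet) \simeq \holim_\Delta (f^{-1}L_F(A\smsh W^\bullet))$, as desired.

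The main obstacle I anticipate is the passage from the $Y$-local statement to the integral one, i.e., controlling connectivity of a map knowing only that it is an equivalence after smashing with a fixed finite type $0$ spectrum $Y$, and doing so with a \emph{uniform} loss of connectivity. The cleanest route is probably to fix a finite cell structure on $Y$ and on its Spanier--Whitehead dual $DY$, note that the unit $S^0 \to DY \smsh Y$ (or the relevant idempotent) exhibits the sphere as a retract up to a bounded-below finite complex, and run the usual argument that smashing with such a complex changes connectivity by at most the top cell dimension; here the hypothesis that $Y$ has type $0$ (so $DY \smsh Y$ is a finite complex through which the identity of $S^0$ factors after one stage) is what makes this possible. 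One must also be careful that the constants are genuinely independent of $Z$ and of $m$ so that the connectivity estimates really do tend to infinity; tracking this is bookkeeping but is where the argument could go wrong if the vanishing line hypothesis were used carelessly. The commutation of $f^{-1}L_F(-)$ with the resulting sequential limit is comparatively formal, using that a tower whose maps are eventually $c_m$-connected with $c_m \to \infty$ has the property that its limit is preserved by any exact functor followed by a localization, since such a tower is Mittag--Leffler on homotopy and its limit agrees with any sufficiently late stage in each fixed degree.
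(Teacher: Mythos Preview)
The paper does not give a self-contained proof here; it simply records that the statement is extracted from \cite[\S5]{DH}, combining the material before their Lemma~5.11, Lemma~5.12 itself, and the $Y$-argument in their proof of Theorem~5.3. So your sketch is attempting more than the paper does, and there is no detailed in-paper argument to compare against.

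That said, your proposal has a genuine gap at the first step. You write that a horizontal vanishing line at $s=N$ on the $E_\infty$-page ``says precisely that $\Tot_m \to \Tot_{m-1}$ is, after applying $F(Z, Y\smsh -)$, an isomorphism on $\pi_t$ for $t$ in a range that grows with $m$,'' and hence that $\holim_\Delta \to \Tot_m$ becomes highly connected. That is not what an $E_\infty$ vanishing line encodes. The fiber of $\Tot_m \to \Tot_{m-1}$ contributes the column $E_1^{m,\ast}$; vanishing of $E_\infty^{m,\ast}$ only says every class there is eventually killed by or supports a differential, not that the column is zero in any range. An $E_\infty$ horizontal vanishing line bounds the \emph{length of the filtration} on $\pi_\ast\Tot$; it gives no connectivity estimate on the maps $\Tot \to \Tot_m$. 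So the connectivity statements on which you build the rest of the argument are not available from the hypothesis. There is a second gap at the step where you pass $f^{-1}L_F$ through the tower: Bousfield localization does not preserve connectivity (for instance $L_{\Kn}$ of a highly connected spectrum is typically not highly connected), so even granting your connectivity claim you could not conclude that $f^{-1}L_F$ of the fiber tends to zero. You gesture at pro-constancy, which would suffice, but increasing connectivity of $\lim \to \Tot_m$ is strictly weaker than pro-constancy of the tower and you have not argued for the latter. What the uniformity over all $Z$ actually buys, and what Devinatz--Hopkins exploit, is a \emph{retract} statement rather than a connectivity one: taking $Z$ to be the fiber of $Y\smsh\Tot W^\bullet \to Y\smsh\Tot_m W^\bullet$ for $m\ge N$ and using that $[Z, Y\smsh\Tot W^\bullet] \to [Z, Y\smsh\Tot_N W^\bullet]$ is injective (this is exactly the filtration bound) shows the fiber inclusion is null, so $Y\smsh\Tot W^\bullet$ is a retract of the finite totalization. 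Retracts, unlike connectivity, are preserved by any functor whatsoever, and that is what makes the passage through $f^{-1}L_F(A\smsh -)$ go.
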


\begin{proof} This is all contained in \cite[\S 5]{DH}, even if it is not explicitly stated this way. More specifically,
we combine the material before their Lemma 5.11, Lemma 5.12, and the argument given in the proof of their 
Theorem 5.3, substituting our $Y$ for their spectrum $X$.
\end{proof}

\begin{prop}\label{prop:CommuteHFP}
Let $X$ be a $\G$-spectrum, which is ($\Kn$-locally) dualizable, and such that the diagonal action of $\G$ on $\E\smsh X$ is continuous. 
Then for a closed subgroup $\K$ of $\G$ and any spectrum $A$,
there is a $\Kn$-local equivalence 
\[ 
A \smsh (\E \smsh X)^{h\K} \simeq (A\smsh \E \smsh X)^{h\K}, 
\]
where on the right-hand side, $\K$ is acting trivially on the first factor. 
\end{prop}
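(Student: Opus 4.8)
The plan is to apply \Cref{prop:DHSS} to the cosimplicial spectrum $W^\bullet = \Func(\GG_+^{\bullet+1}, \E\smsh X)^\K$ computing $(\E\smsh X)^{h\K}$, with $A = \E$, $F = \Kn$, and $f = \mathrm{id}$. The left-hand side of \Cref{prop:DHSS} then reads $L_{\Kn}(\E \smsh \holim_\Delta W^\bullet) = L_{\Kn}(\E\smsh (\E\smsh X)^{h\K})$, which since $\E\smsh X$ (and hence its homotopy fixed points) is already $\Kn$-local is just $\E \smsh (\E\smsh X)^{h\K}$; the right-hand side is $\holim_\Delta L_{\Kn}(\E \smsh W^\bullet)$. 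So the content is twofold: first, to identify $\holim_\Delta L_{\Kn}(\E\smsh \Func(\GG_+^{\bullet+1},\E\smsh X)^\K)$ with $(\E\smsh \E\smsh X)^{h\K}$ where $\K$ acts trivially on the first $\E$, and second, to verify the horizontal vanishing line hypothesis of \Cref{prop:DHSS}.

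For the first point, I would use the ``trivial action'' package of \Cref{ex:trivial} to regard $\E\smsh$ as a trivial-$\GG$-spectrum factor and show that $L_{\Kn}(\E\smsh \Func(T_+,\E\smsh X)) \simeq \Func(T_+, \E\smsh \E\smsh X)$ for $T = \GG/\K\times\GG^s$, naturally in the cosimplicial structure. Concretely, $\Func(T_+,\E\smsh X) = \holim_i\hocolim_j F(T_{j+},\E\smsh X\smsh\MJ{i})$, and smashing with $\E$ then $\Kn$-localizing commutes with the finite homotopy colimits $\hocolim_j$ and — using that $\E\smsh X\smsh\MJ{i}$ is already $\Kn$-locally small with finite homotopy groups (here $X$ dualizable and \Cref{rem:compl-tower} enter) — with the $\holim_i$ up to the relevant completion, giving $\holim_i\hocolim_j F(T_{j+}, \E\smsh\E\smsh X\smsh\MJ{i}) = \Func(T_+,\E\smsh\E\smsh X)$. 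One must check this is compatible with the coface and codegeneracy maps, so that taking $\holim_\Delta$ yields exactly the cosimplicial object defining $(\E\smsh\E\smsh X)^{h\K}$ with $\K$ trivial on the first factor; the trivial-action bookkeeping of \Cref{rem:all-the-actions} and \Cref{rem:func} is what makes this match up.

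The main obstacle is verifying the horizontal vanishing line hypothesis of \Cref{prop:DHSS} for $W^\bullet = \Func(\GG_+^{\bullet+1},\E\smsh X)^\K$. Here I would reduce to the known case for $\E$ itself: by \Cref{lem:ContinuousMaps} the Bousfield--Kan spectral sequence for $F(Z, Y\smsh W^\bullet)$ has $E_2$-term $\pi^s\Map^c(\GG/\K\times\GG^\bullet, (Y\smsh\E\smsh X)^{-t}Z)$, i.e.\ the continuous cochain complex computing $H^s_c(\K, (Y\smsh\E\smsh X)^{-*}Z)$. Since $X$ is $\Kn$-locally dualizable, $\E\smsh X$ is a retract, up to suspension and finite cell constructions, of a wedge of copies of $\E$ after smashing with a type $n$ complex; alternatively, since $X$ dualizable implies $\E_*X$ is a finitely generated $\E_*$-module, the relevant cochain complexes for $\E\smsh X$ are built from finitely many shifted copies of those for $\E$. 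Thus a horizontal vanishing line for the $\E$-spectral sequence — which is exactly what Devinatz--Hopkins establish in \cite[\S 5]{DH} and which underlies \Cref{thm:DH-fixed-pts} — propagates to one for $\E\smsh X$, with the intercept $N$ depending only on $n$ (and the number of cells of a dualizing complex), uniformly in $Z$. Once the vanishing line is in hand, \Cref{prop:DHSS} applies verbatim and, combined with the identification of the first paragraph, gives the claimed equivalence.
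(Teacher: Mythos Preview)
Your overall strategy matches the paper's: apply \Cref{prop:DHSS} with $A=\E$, $F=\Kn$, $f=\mathrm{id}$ to the cosimplicial spectrum $W^\bullet$ defining $(\E\smsh X)^{h\K}$, and identify the right-hand side with $(\E\smsh\E\smsh X)^{h\K}$. Your first point (the identification) is fine, and in fact more explicit than the paper, which leaves it implicit.

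The gap is in your verification of the vanishing line. You propose to reduce to the case $X=S^0$ by arguing that, since $X$ is dualizable, ``the relevant cochain complexes for $\E\smsh X$ are built from finitely many shifted copies of those for $\E$''. But this is only true as abelian groups, not as $\K$-modules: the $\K$-action on $\E\smsh X$ is the \emph{diagonal} one, so a cell decomposition or a basis of $\E_\ast X$ over $\E_\ast$ will typically not be preserved by $\K$. A non-equivariant splitting or retraction gives you no control over $H^s_c(\K,-)$. Similarly, replacing $Z$ by $Z\smsh DX$ in the $\E$-case does not reproduce the $\E\smsh X$-case, because on the $\E$-side $\K$ would act trivially on $DX$, whereas on the $\E\smsh X$-side it does not. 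So the ``propagation'' step, as written, does not go through.

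The paper's fix is to work directly rather than by reduction. One takes $Y$ to be Smith's finite type~$0$ complex with $\E_\ast Y$ even, free over $\E_\ast$, and free as a $C$-module for every cyclic $C\subset\G$ of order $p$. Then
\[
\pi_t F(Z, Y\smsh W^s)\cong \Map^c\bigl(\G^{s+1},\ \E^{-t}(Z\smsh DX)\otimes_{\E_0}\E_0 Y\bigr)^\K,
\]
and the key point is that a tensor product $M\otimes_{\E_0}\E_0 Y$ with diagonal $C$-action is again $C$-free (shear the action), regardless of the $C$-action on $M$. Hence $H^s_c(\K,-)$ vanishes for $s>n^2$ by \cite[Lemma~8.3.5]{RavNil}, giving an $E_2$ (not merely $E_\infty$) horizontal vanishing line uniformly in $Z$. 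This is precisely the mechanism behind the Devinatz--Hopkins vanishing you cite; what makes it go through for general $X$ is not a filtration argument but the robustness of $C$-freeness under tensoring. Once you make the choice of $Y$ explicit and invoke this freeness directly, your proof becomes complete and coincides with the paper's.
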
 

\begin{proof}
We will prove this by applying \Cref{prop:DHSS} (with $F = \Kn$, and $v=\mathrm{id}$) to the
cosimplicial spectrum which computes the homotopy fixed points $(\E \smsh X)^{h\K}$.
Specifically, $(\E \smsh X)^{h\K} \simeq \holim_{\Delta} W^\bullet$, with 
\[
W^s = \Func(\G_+^{s+1}, \E\smsh X )^{\K} = \Func(\G/\K_+ \smsh \G^s_+, \E\smsh X).
\]
We need to check that the conditions of \Cref{prop:DHSS} are satisfied; then the result follows. The argument
we give exactly mirrors that of \cite[Theorem 5.3]{DH}.

We choose $Y$ to be a finite type 0 spectrum so that $\E_0Y$ is free as a $C$-module for every cyclic subgroup $C \subseteq \G$ of
order $p$ and so that $\E_1 Y = 0$. Moreover, $\E_* Y$ is free as an $\E_*$-module. Such a spectrum $Y$ is constructed by Jeff Smith; see \cite[\S 6.4, 8.3, 8.4]{RavNil}.

Since both $X$ and $Y$ are dualizable, \Cref{lem:ContinuousMaps} gives us that, for any spectrum $Z$, there is an isomorphism
\[
\pi_t F(Z,Y\smsh W^s) \cong   \Map^c(\G^{s+1}, \pi_tF(Z, \E\smsh X \smsh Y))^\K.
\]
Using again that $X$ and $Y$ are dualizable as well as that $\E_* Y$ is in even degrees and free over $\E_*$,
\[
\pi_t F(Z,\E\smsh X \smsh Y) \cong \E^{-t}(Z\smsh DX) \otimes_{\E_0} \E_0(Y).
\]
Now $\E_0(Y)$ is free as a $C$-module for every cyclic subgroup $C \subseteq \G$ of order $p$,
so the same is true for $\pi_t F(Z,\E\smsh X \smsh Y)$, and that fact implies that 
\[
\pi^s \pi_t F(Z, Y\smsh W^\bullet) \cong H^s(\K, \E^{-t}(Z\smsh D X) \otimes_{\E_0} \E_0(Y))
\]
is zero for $s>n^2$ \cite[Lemma 8.3.5]{RavNil}.\footnote{The quoted results only claims the vanishing for $s > N$ where 
$N$ depends only on $n$ and $p$. To get $N=n^2$ would require reworking the proof and using that $\GG$ has
virtual Poincar\'e duality of dimension $n^2$.} In particular, this gives a horizontal vanishing line at the $E_2$-page, and 
\Cref{prop:DHSS}  applies to give the claim.
\end{proof}

\begin{cor}\label{cor:ActMoravaModule}
If $X$ and $\K$ are as in \Cref{prop:CommuteHFP}, then there is an isomorphism of Morava modules
\[\E_\ast((\E \smsh X)^{h\K})\cong \Map^c(\GG/\K,\E_\ast X)\]
where the Morava module structure on the right-hand side is the conjugation action described in \Cref{rem:all-the-actions}.
\end{cor}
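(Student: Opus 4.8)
The plan is to combine the equivalence of spectra provided by \Cref{prop:CommuteHFP} with the computation of the homotopy groups of $\Func$-spectra from \Cref{lem:ContinuousMaps}, and then to identify the resulting $\GG$-action. First I would apply \Cref{prop:CommuteHFP} to get the equivalence $\E \smsh (\E \smsh X)^{h\K} \simeq (\E \smsh \E \smsh X)^{h\K}$, where on the right $\K$ acts trivially on the first (leftmost) $\E$-factor. Since $\Kn$-localization is implicit, taking homotopy groups of the right-hand side gives $\E_\ast((\E \smsh X)^{h\K})$ on the nose. So the task reduces to computing $\pi_\ast$ of $(\E \smsh \E \smsh X)^{h\K}$, where this homotopy fixed point spectrum is $\holim_\Delta \Func((\GG/\K)_+ \times \GG_+^{\bullet}, \E \smsh \E \smsh X)$ with $\K$ acting only through the right $\E$-factor and the $X$-factor.

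Next I would run the Bousfield--Kan spectral sequence for this cosimplicial spectrum, exactly as set up in \Cref{rem:HFPSS}. By \Cref{lem:ContinuousMaps} (applied with $Z = S^0$, $T = \GG^{s+1}$, and $A = \E \smsh \E \smsh X$, whose relevant homotopy groups are finite since $\E \smsh X$ is dualizable and $\Kn_\ast \E$ is even-concentrated — i.e.\ $\E \smsh \E \smsh X$ satisfies the hypotheses of \Cref{lem:etaiso}), the $E_2$-page is the continuous cohomology $H^s_c(\K, \E_\ast(\E \smsh X))$ where $\K$ acts on $\E_\ast(\E \smsh X)$ via the \emph{right} $\E$-factor together with $X$. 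Using the isomorphism $\eta$ of \eqref{eq:fund-iso} (an isomorphism by \Cref{lem:etaiso}) together with part (2) of \Cref{rem:all-the-actions}, this $\K$-module is $\Map^c(\GG, \E_\ast X)$ with $\K$ acting by $(h \star \phi)(g) = \phi(gh)$. A shearing/Shapiro-type argument then identifies $H^s_c(\K, \Map^c(\GG, \E_\ast X)) = H^s_c(\K, \Map^c(\GG/\K, \E_\ast X) \otimes \text{(something free over }\K\text{)})$; more precisely $\Map^c(\GG, \E_\ast X)$ is coinduced along $\K \hookrightarrow \GG$, so its continuous cohomology is concentrated in degree $0$ and equals $\Map^c(\GG/\K, \E_\ast X)$. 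Hence the spectral sequence collapses to the zero line and $\pi_\ast (\E \smsh \E \smsh X)^{h\K} \cong \Map^c(\GG/\K, \E_\ast X)$.

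Finally I would pin down the Morava module structure — that is, the action of $\GG$ coming from the \emph{first}, untouched $\E$-factor. By \Cref{lem:inclusion-of-fixed} (applied in the appropriate variant, with $\K$-fixed points rather than $\GG$-fixed points) and naturality, the equivalence of \Cref{prop:CommuteHFP} is $\GG$-equivariant for the left-$\E$ action on both sides. On $\E_\ast(\E \smsh \E \smsh X) \cong \Map^c(\GG, \E_\ast X)$ the left-$\E$ action is, by part (1) of \Cref{rem:all-the-actions}, the conjugation action $(h\phi)(g) = h \ast_d \phi(h^{-1} g)$; this action commutes with the $\star$-action (last sentence of \Cref{rem:all-the-actions}), so it descends to the invariants $\Map^c(\GG/\K, \E_\ast X)$ and there it is still given by the same conjugation formula. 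This yields the claimed isomorphism of Morava modules.

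The main obstacle is the bookkeeping of the two commuting $\GG$-actions on $\E_\ast(\E \smsh \E \smsh X)$: one must be scrupulous about which $\E$-factor and which copy of $\GG$ (the acting copy versus the source of $\Map^c(\GG, -)$) is responsible for each action, so that the collapse of the spectral sequence is computed against the \emph{right}-factor action while the surviving module structure is recorded for the \emph{left}-factor action. Establishing that the \Cref{prop:CommuteHFP} equivalence is genuinely $\GG$-equivariant for the left action — rather than merely an equivalence of underlying spectra — is the point that requires the most care, but it follows formally from the construction via \Cref{prop:DHSS} together with \Cref{lem:inclusion-of-fixed}.
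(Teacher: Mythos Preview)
Your proposal is correct and follows essentially the same route as the paper: apply \Cref{prop:CommuteHFP}, run the homotopy fixed point spectral sequence, use the isomorphism $\eta$ of \Cref{lem:etaiso} and \Cref{rem:all-the-actions}(2) to identify the $E_2$-term as $H^*_c(\K,\Map^c(\GG,\E_\ast X))$ with the right-translation $\K$-action, collapse via Shapiro/coinduction, and read off the Morava module structure from the commuting conjugation action of \Cref{rem:all-the-actions}(1). Your extra care about the $\GG$-equivariance of the \Cref{prop:CommuteHFP} equivalence is more than the paper spells out, but the argument is the same.
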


\begin{proof} \Cref{prop:CommuteHFP} implies that
$\E_\ast((\E \smsh X)^{h\K}) \cong \pi_*(\E \smsh \E\smsh X)^{h\K}$. We will use the homotopy fixed point spectral 
sequence computing $\pi_*(\E \smsh \E\smsh X)^{h\K}$. As was discussed in \Cref{rem:all-the-actions}, there is a $\K$-
equivariant isomorphism \[\E_*( \E\smsh X) \cong \Map^c(\G, \E_*X)\] with the $\K$-action on $\E_*(\E\smsh X) = \pi_*(\E 
\smsh \E \smsh X)$ the diagonal action on the right two factors and the $\K$-action on $\Map^c(\G, \E_*X)$ is right 
multiplication on the source. It follows that the $E_2$-term of the homotopy fixed point spectral sequence is 
\[
H^*(\K, \pi_*(\E \smsh \E\smsh X)) \cong H^*(\K, \Map^c(\G, \E_*X)).
\] 
Furthermore
\[
H^s(\K, \Map^c(\G, \E_*X)) \cong
\begin{cases}
\Map^c(\GG/\K,\E_\ast X), &s=0;\\
0,&s\ne 0
\end{cases}
\]
since $\Map^c(\G, \E_*X)$, is induced as $\GG$-module, and hence as $\K$-module. Thus, the
homotopy fixed point spectral sequence collapses and the edge homomorphism gives an isomorphism
of Morava modules
\begin{align*}
\E_\ast((\E \smsh X)^{h\K})&  \xra{\cong} (\E_\ast(\E \smsh X))^\K \cong \Map^c(\GG/\K,\E_\ast X).\qedhere
\end{align*}
\end{proof}

\section{The Tate sphere and the determinant sphere}\label{sec:TateDet}

In order to define the determinant sphere, we need a spectrum-level construction which twists actions. This is accomplished by 
a sphere spectrum we suggestively denote by $\ST$, to be indicative of a Tate twist. Namely, $\ST$ is the $p$-completed 
sphere spectrum $S^0$ with a continuous action of $\Z_p^{\times}$ coming from its action as automorphisms on
$\pi_0S^0$, to be constructed below.

We can also consider $\ST$ as a spectrum with a $\G$-action, where $\G$ acts through the determinant homomorphism
\[
\det \colon \G \longrightarrow \Z_p^{\times},
\]
defined as in \cite[Section 1.3]{ghmr}. The determinant is a surjection and we let $\SG$ denote its kernel, so that there is an exact sequence
\[
1 \longrightarrow \SG \longrightarrow \G \longrightarrow \Z_p^{\times} \longrightarrow 1. 
\]
We will then define $\Sdet$ as the homotopy fixed points of a particular $\GG$-spectrum in the $\Kn$-local category.

We now begin the construction of $\ST$; we will start by constructing a discrete action of a dense subgroup of $\Z_p^\times$. If $p > 2$, we have a decomposition
\[
(1 +p\ZZ_p) \times \mmu \cong \ZZ_p^\times
\]
where $\mmu=\FF_p^\times$ is the cyclic group of order $p-1$ given by the Teichm\"uller lifts. Let $\Cc \subseteq 1+p\ZZ_p$
be the infinite cyclic subgroup generated by $\tau = 1+p \in 1 +p\ZZ_p$. 

If $p=2$, we have a slightly different decomposition
\[
(1 +4\ZZ_2) \times \mmu \cong \ZZ_2^\times
\]
where now $\mmu = \{ \pm 1 \}$. Let $\Cc$ be generated by $\tau = 1+4 =5 \in 1+4\ZZ_2$.

With this setup, we write $G=\Cc \times \mmu$ for all primes. Note that $G$ is a dense subgroup of $\Z_p^\times$, and $\tau$ is a generator of the torsion-free subgroup $\Cc\cong \ZZ$.
If $p> 2$ the inclusion $\Cc \to 1+p\ZZ_p$ completes to an isomorphism $\ZZ_p \cong 1+p\ZZ_p$.
At $p=2$ we get a similar isomorphism $\ZZ_2 \cong 1 +4\ZZ_2$.

\begin{prop}\label{prop:the-disc-action} The inclusion $G \to \ZZ_p^\times = \pi_1B\haut(S^0)$ can be
canonically realized by a map
\[
BG \longr B\haut(S^0).
\]
\end{prop}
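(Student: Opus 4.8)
The plan is to realize $\haut(S^0)$ as a union of path components of the infinite loop space $\Omega^\infty(S^0)$ of the $p$-complete sphere, namely those lying over the units $\Z_p^\times\subseteq\Z_p=\pi_0(S^0)$. Then $\pi_0\haut(S^0)=\Z_p^\times$ and $\pi_k\haut(S^0)\cong\pi_k(S^0)$ for $k\ge1$, which for $k\ge1$ is a finite abelian $p$-group. Consequently $B\haut(S^0)$ has $\pi_1=\Z_p^\times$ and $\pi_tB\haut(S^0)\cong\pi_{t-1}(S^0)$ a finite $p$-group for $t\ge2$. Moreover, since $S^0$ is an $E_\infty$-ring spectrum, $\haut(S^0)$ is a grouplike $E_\infty$-space, so $B\haut(S^0)$ is an infinite loop space, and in particular carries a homotopy-commutative, homotopy-unital H-space structure. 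As $BG=B\Cc\times B\mmu=S^1\times B\mmu$, I would build the desired map factor by factor and then multiply the pieces using this H-space structure.

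The factor $B\Cc=S^1$ is handled at once: a based map $S^1\to B\haut(S^0)$ sending $\tau$ to $1+p\in\Z_p^\times=\pi_1B\haut(S^0)$ is just a choice of the element $1+p$, and it is unique up to homotopy; call it $f_\Cc$. For the factor $B\mmu$ when $p$ is odd, where $|\mmu|=p-1$ is prime to $p$, I would realize the inclusion $\mmu\hookrightarrow\Z_p^\times=\pi_1B\haut(S^0)$ by obstruction theory over the skeleta of $B\mmu$: the map extends over the $2$-skeleton automatically because $\mmu$ is torsion (so a generator maps to a torsion element of $\pi_1$), and the successive obstructions to, and indeterminacies in, extending over the higher skeleta lie in group-cohomology groups $H^s(B\mmu;\pi_tB\haut(S^0))$ with $s,t\ge2$ and local coefficients via the chosen $\pi_1$-action; since each $\pi_tB\haut(S^0)$ ($t\ge2$) is a finite $p$-group on which $|\mmu|=p-1$ acts invertibly, all of these vanish. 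This produces an essentially unique $f_\mmu\colon B\mmu\to B\haut(S^0)$ realizing $\mmu\hookrightarrow\Z_p^\times$.

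At $p=2$, where $\mmu=\{\pm1\}$ has order $2=p$ and the vanishing above fails, I would instead take $f_\mmu$ to be the composite $B\mmu=BO(1)\to BO\xrightarrow{BJ}B\haut(S^0)$ given by the real $J$-homomorphism; on $\pi_1=\pi_0$ this carries the reflection to the degree-$(-1)$ self-equivalence of $S^0$, i.e.\ to $-1\in\Z_2^\times$, so it realizes $\mmu\hookrightarrow\Z_2^\times$. In all cases I then set
\[
f=(f_\Cc\circ\mathrm{pr}_1)\cdot(f_\mmu\circ\mathrm{pr}_2)\colon S^1\times B\mmu\longrightarrow B\haut(S^0),
\]
using the H-space multiplication; restricting to the two factors recovers $f_\Cc$ and $f_\mmu$ up to homotopy, so on $\pi_1$ the map $f$ is the homomorphism $\Cc\times\mmu\to\Z_p^\times$ whose components are the inclusions $\Cc\hookrightarrow 1+p\Z_p$ and $\mmu\hookrightarrow\mmu$, which under $(1+p\Z_p)\times\mmu\cong\Z_p^\times$ is exactly the inclusion $G\hookrightarrow\Z_p^\times$. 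Each ingredient is canonical -- for odd $p$ even unique up to homotopy -- so $f$ is canonical.

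The one place where something other than formal nonsense is needed is the prime $2$: there the torsion subgroup $\mmu$ has order divisible by $p$, the cohomological vanishing breaks down, and one must supply geometric input, namely the $J$-homomorphism (equivalently, the fact that the sign representation of $\Z/2$ puts an honest $\Z/2$-action by $\pm1$ on $S^0$). One should also note that at $p=2$ we only assert the existence of a canonical realization, not the contractibility of the space of choices -- that stronger statement comes for free from the obstruction theory only when $p$ is odd. A minor subsidiary point is keeping track of the $\pi_1$-action on the higher homotopy groups in the obstruction theory, though since those groups are finite $p$-groups this does not affect any of the vanishing.
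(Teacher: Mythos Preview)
Your proof is correct and follows essentially the same approach as the paper: use the infinite loop space structure on $B\haut(S^0)$ to reduce to the two factors $B\Cc\simeq S^1$ and $B\mmu$, handle $S^1$ by the element $\tau$, handle $B\mmu$ for odd $p$ by obstruction theory (the higher obstructions vanish since $|\mmu|$ is prime to $p$ while the higher homotopy of $B\haut(S^0)$ is $p$-primary), and at $p=2$ use the $J$-homomorphism $BO(1)\to BO\to B\haut(S^0)$. One tiny slip: at $p=2$ the generator $\tau$ is $5=1+4$, not $1+p$; you want $f_{\Cc}$ to send the generator of $\Cc$ to $\tau\in\ZZ_p^\times$ in all cases.
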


\begin{proof} Since $B\haut(S^0)$ is an infinite loop space we need only realize separately the maps $\Cc \to \ZZ_p^\times$
and $\mmu \to \ZZ_p^\times$ as maps $B\Cc \to B\haut(S^0)$ and $B\mmu \to B\haut(S^0)$. The map we want
will then be the composite
\[
BG \simeq B\Cc \times B\mmu \longrightarrow B\haut(S^0) \times B\haut(S^0) \longrightarrow B\haut(S^0)
\]
where the second map is the loop space multiplication. 

At all primes $B\Cc \simeq B\ZZ \simeq S^1$ and the choice of $\tau$ defines the required map $S^1 \to B\haut(S^0)$.

If $p=2$, then $B\mmu \simeq B\ZZ/2 \simeq BO(1)$ and the map we need is defined by the composition
\[
BO(1) \longrightarrow BO \longrightarrow B\haut(S^0).
\]
Suppose $p>2$ and let $A$ be some $2$-skeleton of $B\mmu$. The inclusion $\mmu \subseteq \ZZ_p^\times$ defines a map
$A \to B\haut(S^0)$ by extending a generator of $\mmu \subset \pi_1B\haut(S^0)$ to $A$. Since $\pi_i B\haut(S^0) \cong \pi_{i-1}S^0$ is $p$-complete for $i \geq 2$ and $\mmu$ has order prime to $p$, the map out of $A$ extends uniquely to a map $B\mmu \to B\haut(S^0)$.
\end{proof}

Let $k \geq 1$ and let $G_k \subseteq G$ be the kernel of the composition
\[
\xymatrix{
G \rto^-{\subseteq} & \ZZ_p^\times \rto & (\ZZ/p^k)^\times.
}
\]
If $p > 2$, then $G_1 =\Cc$ and $G_k$ is infinite cyclic generated by  $\tau^{p^{k-1}}$. If $p = 2$,
then $G_2 = \Cc$ and for $k > 1$ the group $G_k$ is infinite cyclic generated by $\tau^{p^{k-2}}$. We have that the intersection $\cap G_k $ is trivial, and $\lim_k G/G_k \cong \ZZ_p^\times$; thus, the subgroups $G_k$ define the usual topology on $\ZZ_p^\times$. 

\begin{prop}\label{prop:moore-tower} Let $\STa$ be the $p$-complete sphere spectrum with the discrete action of $G$ constructed above. If $p$ is odd let $k\geq 1$ and if $p =2$
let $k > 1$. Then there is
an equivalence
\[
S/p^k \simeq EG_+ \wedge_{G_k} \STa
\]
and the residual action of $G/G_k \cong (\ZZ/p^k)^\times$ realizes the standard action of $(\ZZ/p^k)^\times$ on
$\ZZ/p^k \cong \pi_0S/p^k$.
\end{prop}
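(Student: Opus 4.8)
The plan is to analyze the smash product $EG_+ \wedge_{G_k} \STa$ by first understanding what the action of $G$ on $\STa = S^0_p$ looks like after collapsing. Recall $\STa$ is the $p$-complete sphere with the discrete $G = \Cc \times \mmu$ action built in \Cref{prop:the-disc-action}, where $\Cc = \langle \tau \rangle$ acts through $\Cc \hookrightarrow 1+p\Z_p$ (or $1+4\Z_2$) and $\mmu$ acts through the Teichm\"uller lifts (resp. $\{\pm 1\}$). Since $G_k$ is precisely the kernel of $G \to (\Z/p^k)^\times$, the quotient $G/G_k \cong (\Z/p^k)^\times$, and so the residual action in the statement makes sense; the content is to identify the underlying homotopy type with $S/p^k$ and to check the action is the standard one on $\pi_0$.

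First I would reduce to a statement about $EG_+ \wedge_{G_k} S^0_p$. Since $G_k$ is infinite cyclic — generated by $\tau^{p^{k-1}}$ if $p$ is odd, or $\tau^{p^{k-2}}$ if $p=2$ and $k>1$ — we have $EG_k \simeq E\Z$ and hence a cofiber sequence
\[
EG_{k+} \wedge_{G_k} S^0_p \xrightarrow{\ 1 - \gamma\ } EG_{k+} \wedge_{G_k} S^0_p \longrightarrow EG_+ \wedge_{G_k} S^0_p
\]
is the wrong framing; rather, the standard model is that for an infinite cyclic group $\langle \gamma \rangle$ acting on a spectrum $Y$, the homotopy orbits $Y_{h\langle\gamma\rangle} = EG_+\wedge_{\langle\gamma\rangle} Y$ sit in a cofiber sequence $Y \xrightarrow{1-\gamma} Y \to Y_{h\langle\gamma\rangle}$. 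Here $Y = S^0_p$ and $\gamma$ is the generator of $G_k$ acting on $\pi_0 S^0_p = \Z_p$. Crucially, $\gamma$ acts on the $p$-complete \emph{sphere spectrum} — not just on $\pi_0$ — by an element of $\haut(S^0_p)$, and since $\haut(S^0_p) \simeq \Z_p^\times$ acts on $S^0_p$ only through its effect on $\pi_0$ (the sphere is, after $p$-completion, still rigid: a self-equivalence is determined up to homotopy by its degree), the map $1 - \gamma\colon S^0_p \to S^0_p$ is multiplication by $1 - (\text{generator of }G_k \text{ in }\Z_p^\times)$.

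The key computation is then that this generator of $G_k \subseteq 1+p\Z_p$ is $\tau^{p^{k-1}} = (1+p)^{p^{k-1}}$ for $p$ odd, which equals $1 + p^k \cdot(\text{unit in }\Z_p)$; hence $1 - \tau^{p^{k-1}} = p^k u$ for a unit $u \in \Z_p^\times$. So the cofiber sequence becomes $S^0_p \xrightarrow{\ p^k u\ } S^0_p \to EG_+\wedge_{G_k}\STa$, and since $p^k u$ and $p^k$ differ by a unit, the cofiber is $S/p^k$. For $p=2$ and $k>1$ the generator is $\tau^{p^{k-2}} = 5^{2^{k-2}} = 1 + 2^k\cdot(\text{unit})$ by the standard $2$-adic valuation computation, giving the same conclusion. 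This is the step I expect to be the main obstacle, though "obstacle" overstates it: the real subtlety is justifying that a self-map of $S^0_p$ inducing multiplication by $\lambda$ on $\pi_0$ is genuinely homotopic to the degree-$\lambda$ map (so that $1-\gamma$ is the degree $1-\lambda$ map and its cofiber is literally $S^0_p/\lambda' = S/p^k$), which follows because $[S^0_p, S^0_p] = \pi_0 S^0_p = \Z_p$ with the degree map being an isomorphism onto $\mathrm{End}$, i.e. the sphere is a "solid" or idempotent-like object at the level of $\pi_0$-endomorphisms. I would cite the rigidity of the $p$-complete sphere, or equivalently the fact (used implicitly already in \Cref{prop:the-disc-action}) that $\pi_0 B\haut(S^0) \cong \pi_0 S^0$ wait — rather $\pi_1 B\haut(S^0)\cong (\pi_0 S^0)^\times = \Z_p^\times$ after completion — which is exactly the input to the whole section.

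Finally I would track the residual $G/G_k$-action. The group $G/G_k \cong (\Z/p^k)^\times$ acts on $EG_+\wedge_{G_k}\STa$; on homotopy groups, $\pi_0(EG_+\wedge_{G_k}\STa) = \Z/p^k$, and the action of a class $[g] \in G/G_k$ is induced by the action of $g$ on $\STa$, which on $\pi_0 S^0_p = \Z_p$ is multiplication by the image of $g$ in $\Z_p^\times$; passing to the cokernel $\Z_p/p^k\Z_p = \Z/p^k$ this is multiplication by the image of $g$ in $(\Z/p^k)^\times$ — i.e. exactly the standard action. I would assemble these into the asserted equivalence, being careful that the equivalence $S/p^k \simeq EG_+\wedge_{G_k}\STa$ is constructed compatibly with (not merely abstractly intertwining) the residual actions, so that the statement about $\pi_0$ is a consequence of the construction rather than an afterthought. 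The whole argument is short; the only genuinely load-bearing facts are the rigidity of $S^0_p$ and the elementary $p$-adic identity $v_p(1 - (1+p)^{p^{k-1}}) = k$ (resp. $v_2(1-5^{2^{k-2}}) = k$).
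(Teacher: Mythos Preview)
Your argument is correct and lands on the same arithmetic core as the paper's proof, but the packaging is different. The paper runs the homotopy orbit spectral sequence in integral homology,
\[
E^2_{s,t} \cong H_s(G_k, H_t \STa) \Longrightarrow H_{s+t}\big(EG_+ \wedge_{G_k}\STa\big),
\]
observes that $G_k$ is infinite cyclic generated by an element congruent to $1+p^k$ modulo $p^{k+1}$, and concludes the $E^2$-page is $\Z/p^k$ concentrated in bidegree $(0,0)$; it then recognizes the result as a Moore spectrum from its homology, with the $G/G_k$-action read off from the $G$-module surjection $\Z_p \twoheadrightarrow \Z/p^k$. You instead model the $G_k$-homotopy orbits directly as the cofiber of $1-\gamma$ on $S^0_p$ and identify that self-map in $[S^0_p,S^0_p]=\Z_p$ as multiplication by $p^k\cdot(\text{unit})$. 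The load-bearing input, the valuation $v_p(1-\tau^{p^{k-1}})=k$ (resp.\ $v_2(1-5^{2^{k-2}})=k$), is the same in both. Your route is a bit more hands-on and avoids both the spectral sequence and the implicit fact that $H_\ast(S^0_p;\Z)$ is $\Z_p$ concentrated in degree~$0$; the paper's route cleanly separates the homology computation from the recognition of the Moore spectrum. Either way the residual action is handled by naturality, exactly as you indicate.
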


\begin{proof} The homotopy orbit spectrum $EG_+ \wedge_{G_k} \STa$ is a connected spectrum and we have
a homotopy orbit spectral sequence for $H_\ast(-) = H_\ast(-,\ZZ)$:
\[
E^2_{p,q} \cong H_p(G_k,H_q\STa) \Longrightarrow H_{p+q} (EG_+ \wedge_{G_k} \STa).
\]
Let $p > 2$. The group $G_k$ is infinite cyclic generated by $\tau^{p^{k-1}}$ where $\tau = 1+p$.
Since $\tau^{p^{k-1}} \equiv 1+p^k$
modulo $p^{k+1}$ we have $E^2_{p,q} = 0$ unless $(p,q) = (0,0)$ and there is a surjection of $G$-modules
\[
\ZZ_p \cong H_0(\STa) \longr H_0(G_k,H_0(\STa)) \cong \ZZ/p^k.
\]
It follows that $EG_+ \wedge_{G_k} \STa$ must be a Moore spectrum for $\ZZ/p^k$ with the standard action of $\ZZ/p^k$ on $\pi_0S/p^k$. The proof at the prime $2$ is completely analogous.
\end{proof}

Recall that continuous actions were discussed in \Cref{sec:continuousactions}. See in particular \Cref{def:cont-g-spec} and \Cref{rem:otherG}.
\begin{prop}\label{prop:cont-zz-Sone} The $G$-action on $\STa$ extends to a continuous action of the
profinite group $\ZZ_p^\times$, in the sense that we have an augmented cosimplicial spectrum
\[
\STa \longrightarrow \Func((\ZZ_p^\times)_+^{\bullet +1},\STa), 
\]
so that the augmentation refines the $\Z_p^\times$--action.
\end{prop}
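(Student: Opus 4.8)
The plan is to produce the augmented cosimplicial spectrum $\STa \to \Func((\Z_p^\times)_+^{\bullet+1},\STa)$ by assembling, level by level and compatibly in $k$, the finite cosimplicial diagrams that compute the homotopy fixed points of the \emph{finite} quotient groups $(\Z/p^k)^\times$ acting on $S/p^k$, and then passing to the homotopy limit over $k$. The point is that the discrete $G$-action on $\STa$ of \Cref{prop:the-disc-action}, after smashing with $S/p^k$, factors through the finite quotient $G/G_k \cong (\Z/p^k)^\times$ by \Cref{prop:moore-tower}. So for each $k$ we have a genuine action of the finite group $(\Z/p^k)^\times$ on $S/p^k$, which automatically gives an augmented cosimplicial spectrum
\[
S/p^k \longrightarrow F\bigl(((\Z/p^k)^\times)_+^{\bullet+1}, S/p^k\bigr)
\]
as recalled in the discussion preceding \Cref{def:cont-g-spec}. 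First I would check that these are compatible as $k$ varies, using the maps $q\colon S/p^{k+1}\to S/p^k$ together with the projections $(\Z/p^{k+1})^\times \to (\Z/p^k)^\times$: the relevant squares commute because both are induced from the single discrete $G$-action on $\STa$, which is what makes the quotient actions on $S/p^k$ coherent. Concretely one gets, for each cosimplicial level $s$, a tower of spectra
\[
\cdots \to F\bigl(((\Z/p^{k+1})^\times)_+^{s+1}, S/p^{k+1}\bigr) \to F\bigl(((\Z/p^k)^\times)_+^{s+1}, S/p^k\bigr) \to \cdots
\]
compatible with all coface and codegeneracy maps.

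Next I would take the homotopy limit over $k$ at each cosimplicial level. By the very definition of $\Func$ (\Cref{rem:func-defined}), applied with the role of the tower $\{\MJ{i}\}$ played by the Moore tower $\{S/p^k\}$ as in \Cref{rem:otherG} — with the extra simplification that the finite group $(\Z/p^k)^\times$ is already discrete, so no $\hocolim_j$ over open subgroups is needed at finite stage — we have
\[
\holim_k F\bigl(((\Z/p^k)^\times)_+^{s+1}, S/p^k\bigr) \;\simeq\; \Func\bigl((\Z_p^\times)_+^{s+1}, \STa\bigr),
\]
using $\Z_p^\times = \lim_k (\Z/p^k)^\times$ and $\STa \simeq \holim_k \STa\smsh S/p^k \simeq \holim_k S/p^k$ (the $p$-complete sphere is the homotopy limit of its mod-$p^k$ reductions). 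Since $\holim_k$ of an augmented cosimplicial tower is again an augmented cosimplicial spectrum, this produces the desired diagram $\STa \to \Func((\Z_p^\times)_+^{\bullet+1},\STa)$; the augmentation is $\holim_k$ of the unit maps $S/p^k \to F(((\Z/p^k)^\times)_+, S/p^k)$, which by construction refines the action maps, hence refines the $\Z_p^\times$-action in the limit. Finally one verifies that the coface maps of the resulting diagram are the ones prescribed by \Cref{def:cont-g-spec}\eqref{item:augm} — coming from the simplicial structure on $(\Z_p^\times)^{\bullet+1}$ together with the augmentation — which again holds levelwise before passing to the limit because it holds for each finite group $(\Z/p^k)^\times$.

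The main obstacle I anticipate is the bookkeeping needed to guarantee \emph{strict} compatibility of the finite-level cosimplicial objects along the tower in $k$, i.e.\ that one obtains an honest tower of cosimplicial spectra and not merely a diagram commuting up to homotopy; this is where the precise point-set model of the $G$-action from \Cref{prop:the-disc-action} (built from maps $BG \to B\haut(S^0)$, which one unwinds via the bar construction into explicit coface maps $F(G_+^{\bullet+1},\STa)$) has to be used, restricting along $B(G/G_k) \hookleftarrow$ compatibly. Once that coherence is in hand, the passage to $\holim_k$ is formal. A minor secondary point is that one should note the $p=2$ caveat ($k>1$) carried over from \Cref{prop:moore-tower} is harmless: the subgroups $\{G_k\}_{k\geq 2}$ still define the profinite topology on $\Z_2^\times$ and $\STa \simeq \holim_{k\geq 2} S/2^k$, so the homotopy limit is unaffected.
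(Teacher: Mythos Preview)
Your proposal is correct and is essentially the paper's own argument: build the augmented cosimplicial object at each finite level from the $(\Z/p^k)^\times$-action on $S/p^k(1)=EG_+\wedge_{G_k}\STa$ furnished by \Cref{prop:moore-tower}, and then take $\holim_k$ to obtain $\STa\to\Func((\Z_p^\times)_+^{\bullet+1},\STa)$. The only cosmetic difference is that the paper retains the $\hocolim_j$ over finite quotients in the target (as dictated by the definition of $\Func$), while you pass directly to the diagonal $\holim_k F(((\Z/p^k)^\times)_+^{\bullet+1},S/p^k)$; these agree, and your remarks on coherence and the $p=2$ indexing are appropriate.
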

\begin{proof} Write $S/p^k(1)$ for $EG_+ \wedge_{G_k} \STa$ with its $G/G_k \cong (\ZZ/p^k)^\times$-action. Then the
augmented cosimplicial spectra
\[
S/p^k(1) \to F((G/G_k)_+^{\bullet +1},S/p^k(1))
\]
assemble to give a map
\begin{align*}
\STa \simeq \holim_k S/p^k(1) \longrightarrow & \holim_k \hocolim_j F((G/G_j)_+^{\bullet +1},S/p^k(1)) \\
& = \Func((\ZZ_p^\times)_+^{\bullet +1},\STa)
\end{align*}
as needed. 
\end{proof}

\begin{defn}\label{defn:S(1)} We will write $\ST$ for the $p$-complete sphere $S^0$ with the continuous
$\ZZ_p^\times$-action of \cref{prop:cont-zz-Sone}. 
The same construction gives $\ST$ as a continuous $p$-complete $\GG$--spectrum, where $\GG$ acts through the determinant surjection $\det\colon\GG\to\Z_p^\times$.

We refer to this equivariant sphere as the \emph{Tate sphere}.
\end{defn}

Now we take the Morava $E$-theory spectrum $\E$ and give $\E \smsh \ST$ the
diagonal $\GG$-action. The next result indicates that this is an interesting construction. 

\begin{prop}\label{prop:homotopy-ES} There is an isomorphism of Morava modules
\[
\E_\ast\bdet \cong \pi_\ast (\E \smsh \ST) = \E_\ast \ST.
\]
\end{prop}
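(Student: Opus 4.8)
The plan is to compute $\E_*(\E\smsh\ST)$ directly from the cosimplicial model defining the continuous $\Z_p^\times$-action on $\ST$, and to identify the resulting Morava module with $\E_*\bdet$ by tracking how the action is twisted. First I would observe that $\ST\simeq\holim_k S/p^k(1)$, where $S/p^k(1)=EG_+\wedge_{G_k}\STa$ carries its residual $(\Z/p^k)^\times$-action. Smashing with $\E$ and applying $L_{\Kn}$, and using that $\E\smsh S/p^k\simeq \E/p^k$ has homotopy that is already $\mm$-complete and degreewise finite once we pass far enough down the generalized Moore tower $\MJ{i}$, the Milnor sequence has vanishing $\lim^1$ (this is exactly the kind of finiteness input that \Cref{lem:ContinuousMaps} and \Cref{lem:etaiso} are built to exploit), so $\pi_*(\E\smsh\ST)\cong \lim_k \pi_*(\E\smsh S/p^k(1)) = \lim_k \E_*/p^k = \E_*$ as an $\E_*$-module. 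Nonequivariantly this is transparent since $\ST$ is just $S^0_p$; the content is entirely in the $\GG$-action.

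Next I would compute the twisted action. Because $S/p^k(1)=EG_+\wedge_{G_k}\STa$ with the residual $G/G_k\cong(\Z/p^k)^\times$ realizing the \emph{standard} action of $(\Z/p^k)^\times$ on $\Z/p^k\cong\pi_0S/p^k$ (\Cref{prop:moore-tower}), the action of $g\in(\Z/p^k)^\times$ on $\pi_0(\E\smsh S/p^k(1))\cong \E_0/p^k$ is multiplication by the image of $g$. Assembling over $k$ and pulling back along $\det\colon\GG\to\Z_p^\times$, the diagonal action of $\GG$ on $\pi_*(\E\smsh\ST)$ is the usual Morava module action on $\E_*$ twisted by multiplication by $\det(g)\in\Z_p^\times$; that is precisely the definition of $\E_*\bdet=\E_*\ox_{\Z_p}\Zdet$. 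So the iso $\pi_*(\E\smsh\ST)\cong\E_*$ of $\E_*$-modules upgrades to an iso of Morava modules $\E_*(\E\smsh\ST)\cong\E_*\bdet$. One should check that the continuity data (the augmented cosimplicial refinement of \Cref{prop:cont-zz-Sone}) is compatible with the Morava module structure, but this is formal: it is the same bookkeeping as in \Cref{rem:all-the-actions}, now with the extra $\det$-twist on the action.

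The main obstacle will be the first step — rigorously identifying $\pi_*(\E\smsh\ST)$ with $\lim_k\E_*/p^k$ and, more importantly, seeing that no $\lim^1$ or hidden-extension phenomena disturb the module structure. Concretely, one must justify commuting $\E\smsh(-)$ and $L_{\Kn}(-)$ past the homotopy limit $\holim_k S/p^k(1)$, and verify that the tower $(\pi_*(\E\smsh S/p^k(1)))_k$ is Mittag--Leffler (it is: the transition maps $\E_*/p^{k+1}\to\E_*/p^k$ are surjective). Once that is in hand, the identification of the action is essentially a restatement of \Cref{prop:moore-tower}. A slick alternative, which I would mention, is to note that $\ST$ is dualizable (it is nonequivariantly $S^0_p$), so \Cref{lem:etaiso} applies with $Y=S^0$ and $Z=\ST$, giving $\eta\colon\E_*(\E\smsh\ST)\xrightarrow{\cong}\Map^c(\GG,\E_*\ST)$; combined with the explicit description of the action on $\E_*\ST=\E_*$ coming from \Cref{prop:moore-tower}, this again pins down the Morava module as $\E_*\bdet$.
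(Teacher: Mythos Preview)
Your main approach is correct but takes a different route than the paper. The paper's proof is a one-liner: it invokes the Tor (K\"unneth) spectral sequence for $\E \smsh \ST$ and observes that the edge map
\[
\E_\ast \otimes_{\pi_0 S^0} \pi_0 \ST \longrightarrow \pi_\ast(\E \smsh \ST)
\]
is an isomorphism (since non-equivariantly $\ST \simeq S^0$ and $\E$ is already $p$-complete), and that by naturality of the spectral sequence this map respects the $\GG$-action. The left-hand side is then visibly $\E_\ast \otimes_{\ZZ_p} \Zdet = \E_\ast\bdet$. Your argument instead passes through the tower $\ST \simeq \holim_k S/p^k(1)$, computes $\pi_\ast(\E \smsh \ST) \cong \lim_k \E_\ast/p^k$ via a Mittag--Leffler argument, and reads off the action level by level from \Cref{prop:moore-tower}. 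This is more hands-on and makes the twist by $\det$ very explicit, at the cost of the bookkeeping around commuting $\E \smsh (-)$ with the homotopy limit; the paper's approach gets the $\GG$-equivariance for free from naturality and avoids that entirely.

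One remark: your ``slick alternative'' at the end is off target. \Cref{lem:etaiso} computes $\E_\ast(\E \smsh \ST) = \pi_\ast L_{\Kn}(\E \smsh \E \smsh \ST)$, which is a different object from $\E_\ast \ST = \pi_\ast(\E \smsh \ST)$; knowing that the former is isomorphic to $\Map^c(\GG,\E_\ast\ST)$ does not by itself determine the Morava module structure on $\E_\ast\ST$ --- you still need the explicit action computation you already carried out, so the detour through $\eta$ adds nothing here.
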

\begin{proof}
The edge map of the Tor spectral sequence
\[ 
\E_\ast\bdet = \E_\ast \otimes_{\pi_0S^0} \pi_0 \ST \longrightarrow \pi_\ast(\E \smsh \ST) 
\]
is an isomorphism, and respects the $\G$-action by the naturality of the spectral sequence.
\end{proof}

The following technical result is the key to our calculations.

\begin{prop}\label{cont-for-ES1}  The $\GG$-spectrum $\E \smsh \ST$ has the structure of a $\Kn$-local
continuous $\GG$-spectrum.
\end{prop}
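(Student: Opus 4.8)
The plan is to produce the required augmented cosimplicial object $\E\smsh\ST \longrightarrow \Func(\GG_+^{\bullet+1},\E\smsh\ST)$ directly from the building blocks already in hand, namely the continuous $\GG$-structure on $\E$ from \Cref{thm:DH-fixed-pts} and the continuous (in the $p$-complete sense of \Cref{rem:otherG}) $\GG$-structure on $\ST$ coming from \Cref{prop:cont-zz-Sone}. Concretely, I would fix nested open normal subgroups $U_{j+1}\subseteq U_j\subseteq\GG$ with $\bigcap_j U_j=\{e\}$ and, compatibly, index the Moore tower for $\STa\simeq\holim_k S/p^k(1)$; then, at each finite stage, form $F((\GG/U_j)_+^{\bullet+1},\ \E\smsh\MJ{i}\smsh S/p^k(1))$, where on $S/p^k(1)$ the group $\GG/U_j$ acts through its determinant quotient $(\ZZ/p^{k})^\times$. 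Taking $\hocolim_j\holim_{i}\holim_k$ (in a fixed order, $k$ innermost, then $i$, then $j$) assembles these into a single cosimplicial $\GG$-spectrum; the $s=0$ level is $\Func(\GG_+,\E\smsh\ST)$ and the augmentation is $\eta_{\E}\smsh\eta_{\ST}$ composed with the equivalence $\Func(\GG_+,\E)\smsh\ST \xrightarrow{\ \simeq\ }\Func(\GG_+,\E\smsh\ST)$ which holds because $\ST$ is dualizable ($\Kn$-locally it is just $S^0$) — this is exactly the kind of commutation of $F(T_+,-)$ with smashing a dualizable spectrum used in the proof of \Cref{lem:ContinuousMaps}.

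The steps, in order, would be: (1) record that the smash of a continuous $\GG$-spectrum with a dualizable $\Kn$-local spectrum carrying a compatible continuous $\GG$-structure again admits such a structure, by smashing the two augmented cosimplicial diagrams levelwise and using $\Func(\GG_+^{\bullet+1},\E)\smsh\ST\simeq\Func(\GG_+^{\bullet+1},\E\smsh\ST)$; (2) check that the resulting coface maps are the ones prescribed in \Cref{def:cont-g-spec}, i.e. that the $\ST$-twist interacts with the simplicial structure on $\GG^{\bullet+1}$ correctly — this is a diagram chase using that the $\GG$-action on $\ST$ factors through $\det$ and hence through the finite quotients $(\ZZ/p^k)^\times$, so at each finite stage one is just smashing finite-group (co)bar constructions; (3) verify $\Kn$-locality and convergence, namely that $\Func(\GG_+^{\bullet+1},\E\smsh\ST)$ as defined agrees with $\holim_i\hocolim_j F((\GG/U_j)_+^{\bullet+1},(\E\smsh\ST)\smsh\MJ{i})$ in the sense of \Cref{rem:func-defined}; here one uses that $\pi_t((\E\smsh\ST)\smsh\MJ{i})\cong\pi_t(\E\smsh\MJ{i})\otimes_{\ZZ_p}\ZZ_p(1)$ is still finite, so the finiteness hypotheses of \Cref{lem:ContinuousMaps} and of the ${\lim}^1$-vanishing arguments are met, and that $\E\smsh\ST\simeq\holim_i(\E\smsh\ST)\smsh\MJ{i}$ since $\ST$ is $p$-complete and dualizable. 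Finally (4) confirm that the $s=0$ augmentation refines the diagonal $\GG$-action, which is immediate from how the action on $\E\smsh\ST$ was defined as the diagonal of the two actions.

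The main obstacle I expect is Step (2): making precise the compatibility between the two towers — the Moore tower $\{S/p^k(1)\}$ computing $\ST$ and the Hovey–Strickland tower $\{\MJ{i}\}$ computing $\Kn$-localization — and checking that after smashing, the interleaved $\holim_i\holim_k$ (equivalently a single cofinal diagonal homotopy limit) still commutes past the relevant $\hocolim_j$ at the level of the cosimplicial diagram, so that the augmented object one writes down genuinely satisfies the cosimplicial identities rather than merely doing so after applying $\E_*$. Once the bookkeeping of these two completions is in place, everything else reduces to the dualizability manipulations already used repeatedly in \Cref{sec:continuousactions}; indeed an alternative, cleaner route — which I would mention as a remark — is to observe abstractly that for \emph{any} $\Kn$-locally dualizable continuous $\GG$-spectrum $V$ with a compatible continuous action, $\Func(\GG_+^{\bullet+1},V)\simeq\Func(\GG_+^{\bullet+1},S^0_{\Kn})\smsh V$ upgrades the trivial continuous structure on $V$ (in the sense of \Cref{ex:trivial}) to the given diagonal one, and apply this with $V=\E\smsh\ST$ after first establishing the continuous structure on $\E\smsh\ST$ via the diagonal of $\E$'s structure and $\ST$'s structure over the common quotients through which $\det$ factors.
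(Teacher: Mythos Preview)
Your approach is essentially the paper's: work at finite stages with the three towers $\{\MJ{i}\}$, $\{S/p^k(1)\}$, and $\{\GG/U_j\}$, build honest augmented cosimplicial objects there, and assemble by taking the appropriate homotopy (co)limits. You also correctly anticipate that the delicate point is the bookkeeping between the towers.

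The one concrete move you are missing --- and it is exactly what resolves the obstacle you flag in Step~(2) --- is to replace $\E$ by $\E^{hU_j}$ at the finite stages. As written, your finite-stage object $F((\GG/U_j)_+^{\bullet+1},\,\E\smsh\MJ{i}\smsh S/p^k(1))$ does not obviously carry a cosimplicial structure coming from a $\GG/U_j$-action on the target, because $\E$ itself only has a $\GG$-action, not one that factors through $\GG/U_j$. The paper instead restricts to those $j$ for which $U_j$ lies in the kernel of $\GG\xrightarrow{\det}\ZZ_p^\times\to(\ZZ/p^k)^\times$ and forms $\E^{hU_j}\smsh S/p^k(1)\smsh\MJ{i}$: now $\GG/U_j$ acts honestly (residually on $\E^{hU_j}$, through $(\ZZ/p^k)^\times$ on $S/p^k(1)$, trivially on $\MJ{i}$), so the standard finite-group bar construction gives a genuine augmented cosimplicial $\GG$-spectrum
\[
\E^{hU_j}\smsh S/p^k(1)\smsh\MJ{i}\ \longrightarrow\ F\bigl((\GG/U_j)_+^{\bullet+1},\,\E^{hU_j}\smsh S/p^k(1)\smsh\MJ{i}\bigr).
\]
One then uses the $\Kn$-local equivalence $\hocolim_j\E^{hU_j}\simeq\E$ from \Cref{rem:unpack-DH}, together with the identification
\[
\Func(\GG_+^s,\E\smsh\ST)\ \simeq\ \holim_k\holim_i\hocolim_j\,F\bigl((\GG/U_j)_+^s,\,\E\smsh S/p^k(1)\smsh\MJ{i}\bigr)
\]
(both sides are $p$-complete and the map is an equivalence after smashing with $S/p$), to assemble these into the required diagram. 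Note also that the paper, matching \Cref{rem:func-defined}, places $\hocolim_j$ innermost rather than outermost as in your ordering; and your Step~(1) as stated (smashing the two cosimplicial diagrams levelwise) would produce $\Func(\GG_+^{\bullet+1},\E)\smsh\Func(\GG_+^{\bullet+1},\ST)$ rather than $\Func(\GG_+^{\bullet+1},\E\smsh\ST)$, so that shortcut does not quite work without the finite-stage argument above.
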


\begin{proof} As in \eqref{eq:cosim-g} we need to construct an augmented cosimplicial $\GG$-spectrum
\[
\E \smsh \ST \longrightarrow \Func(\GG_+^{\bullet + 1},\E \smsh \ST)
\]
so that the augmentation refines the $\GG$-action on $\E \smsh \ST$. 

As above, we continue writing $S/p^k(1)$ for $EG_+ \wedge_{G_k} \STa$ with its $G/G_k \cong (\ZZ/p^k)^\times$ action.
Let us also write $S/p^k$ for the Moore spectrum when we do not need to refer to  the action. 

Since $\MJ{i}$ and $S/p^k$ are finite spectra we have
\begin{align*}
\Func(\GG_+^s,\E \smsh \ST) &= \holim_i \hocolim_j F((\GG/U_j)^s_+,\E \smsh \ST \smsh \MJ{i})\\
& \xrightarrow{\simeq} \holim_k \holim_i \hocolim_j F((\GG/U_j)^s_+,\E \smsh S/p^k(1) \smsh \MJ{i});
\end{align*}
indeed, both sides of the last equivalence are $p$-complete and the natural map between them is an equivalence after smashing with $S/p$. For all $j$ so that $U_j$ is in the kernel of
\[
\xymatrix{
\GG \rto^-\det & \ZZ_p^\times \rto & (\ZZ/p^k)^\times,
}
\]
the diagonal action of $\GG/U_j$ on $\E^{hU_j} \smsh S/p^k(1) \smsh \MJ{i}$ defines an augmented
cosimplicial $\GG$-spectrum
\[
\E^{hU_j} \smsh S/p^k(1) \smsh \MJ{i} \longr F((\GG/U_j)^{\bullet +1}_+,\E^{hU_j} \smsh S/p^k(1) \smsh \MJ{i}).
\]
Since $\hocolim_j \E^{hU_j} \simeq \E$, these assemble into the cosimplicial spectrum we need. 
\end{proof}

We can now make our central definition.

\begin{defn}\label{def-Sdet} The determinant sphere is the spectrum
\[
\Sdet = (\E \smsh \ST)^{h\GG} = \holim_\Delta \Func(\GG_+^{\bullet +1},\E\smsh \ST)^\GG.
\]
\end{defn}

\begin{rem}\label{rem:some-ss} If $\K \subseteq \G$ is closed we defined (\Cref{def:cont-g-spec})
\[
(\E \smsh \ST)^{h\K} = \holim_\Delta \Func(\G_+^{\bullet+1},\E \smsh \ST)^\K.
\]
Therefore, using \Cref{prop:homotopy-ES} and \Cref{rem:HFPSS}, we have a homotopy fixed point spectral sequence
\[
H^s_c(\K,\E_\ast \bdet) \Longrightarrow \pi_{t-s}(\E\smsh \ST)^{h\K}.
\] 
\end{rem}

We now must show that there is an isomorphism of Morava modules $\E_\ast \Sdet \cong \E_\ast \bdet$. 
But this follows directly from \Cref{prop:homotopy-ES} and 
\Cref{cor:ActMoravaModule}.

\begin{prop}\label{prop:what-were-here-for} There is an isomorphism of
Morava modules
\[
\E_\ast \Sdet \cong \E_\ast\DDet.
\]
\end{prop}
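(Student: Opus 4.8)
The plan is to obtain this directly from \Cref{cor:ActMoravaModule}, applied with $X = \ST$ and $\K = \GG$. First I would verify the hypotheses of \Cref{prop:CommuteHFP} in the case $X = \ST$: it is a $\GG$-spectrum by \Cref{defn:S(1)}; it is $\Kn$-locally dualizable, since $L_{\Kn}\ST \simeq L_{\Kn}S^0$ is the unit of the $\Kn$-local category and hence dualizable; and the diagonal $\GG$-action on $\E \smsh \ST$ is continuous by \Cref{cont-for-ES1}. With these in hand, \Cref{cor:ActMoravaModule} together with \Cref{def-Sdet} gives an isomorphism of Morava modules
\[
\E_\ast \Sdet \;=\; \E_\ast\bigl((\E \smsh \ST)^{h\GG}\bigr) \;\cong\; \Map^c(\GG/\GG,\,\E_\ast \ST) \;=\; \E_\ast \ST,
\]
where the right-hand side is equipped with the conjugation Morava module structure of \Cref{rem:all-the-actions}.

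It then remains to identify this Morava module with $\E_\ast\DDet$. Since $\GG/\GG$ is a point, the conjugation action $(h\phi)(g) = h \ast_d \phi(h^{-1}g)$ collapses to $h\cdot\phi = h\ast_d\phi$; that is, $\E_\ast \ST = \pi_\ast(\E \smsh \ST)$ here carries the diagonal $\GG$-action coming from the actions on both smash factors. By \Cref{prop:homotopy-ES}, the edge map of the Tor spectral sequence furnishes a $\GG$-equivariant isomorphism $\E_\ast\bdet \cong \pi_\ast(\E \smsh \ST)$ for precisely this diagonal action. Composing the two isomorphisms yields $\E_\ast \Sdet \cong \E_\ast\DDet$.

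I do not anticipate a genuine difficulty: the substantial input -- the commutation of $\E \smsh (-)$ with $\K$-homotopy fixed points -- is already supplied by \Cref{prop:CommuteHFP} (and behind it \Cref{prop:DHSS}), while the continuity needed to invoke it is \Cref{cont-for-ES1}. The one point that warrants care is the bookkeeping of group actions: one must check that the conjugation structure produced by \Cref{cor:ActMoravaModule} really does degenerate, when $\K = \GG$, to exactly the diagonal action that \Cref{prop:homotopy-ES} matches with $\E_\ast\DDet$, with no stray inversion or twist intervening. This is settled by the explicit formulas collected in \Cref{rem:all-the-actions}.
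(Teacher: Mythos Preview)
Your proposal is correct and follows exactly the paper's approach: the paper's proof is the single sentence ``Combine \Cref{cor:ActMoravaModule} and \Cref{prop:homotopy-ES},'' and you have simply spelled out the hypothesis-checking and action-bookkeeping that this entails. The extra care you take in verifying dualizability of $\ST$, continuity of the diagonal action via \Cref{cont-for-ES1}, and the degeneration of the conjugation action when $\K=\GG$ is all appropriate and matches the intended argument.
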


We now extend this map to an equivalence of spectra. Let $\iota\colon\Sdet = (\E\smsh \ST)^{h\GG} \to \E \wedge \ST$
be the inclusion of the fixed points from \cref{lem:inclusion-of-fixed}, and let $\mu\colon \E \wedge \E \to \E$ be the multiplication. Define 
\[
f\colon \E \smsh \Sdet \longrightarrow \E \smsh \ST
\]
to be the composition
\begin{equation}\label{eq:keycomposite}
\xymatrix{
\E \smsh \Sdet \ar[r]^-{1 \smsh \iota} & \E \smsh \E \smsh \ST \ar[r]^-{\mu \smsh 1} & \E \smsh \ST.
}
\end{equation}
This map is $\GG$-equivariant if we use the action on $\E$ on the source and the diagonal action on the target. 

\begin{thm}\label{thm:pretty-damn-cool} The map $f\colon \E \smsh \Sdet \to \E \smsh \ST$
of \eqref{eq:keycomposite} is a $\GG$-equivariant equivalence and induces the isomorphism of Morava modules
\[
\E_*\Sdet \cong \E_*{\left<\det \right>}.
\]
of \Cref{prop:what-were-here-for}.
\end{thm}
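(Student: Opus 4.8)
The plan is to check that $f$ is $\GG$-equivariant and then to identify $\pi_\ast f$ with the isomorphism of \Cref{prop:what-were-here-for}; since a map of spectra inducing an isomorphism on homotopy groups is an equivalence, this proves both assertions at once.

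Equivariance is essentially recorded in the paragraph preceding the statement. By \Cref{lem:inclusion-of-fixed} the map $\iota\colon\Sdet\to\E\smsh\ST$ is $\GG$-equivariant for the trivial action on $\Sdet$ and the diagonal action on $\E\smsh\ST$, so $1\smsh\iota\colon\E\smsh\Sdet\to\E\smsh\E\smsh\ST$ is equivariant when the source carries the action on its left factor and the target carries the action on its left factor together with the diagonal action on the remaining two. Then $\mu\smsh1$ is equivariant from the latter to $\E\smsh\ST$ with its diagonal action, since $\mu$ is $\GG$-equivariant for the diagonal action on $\E\smsh\E$. Hence $f$ is $\GG$-equivariant as asserted. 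Since $\E\smsh\Sdet$ and $\E\smsh\ST$ are $\Kn$-local — the latter by \Cref{cont-for-ES1}, the former because \Cref{prop:CommuteHFP} exhibits it as $(\E\smsh\E\smsh\ST)^{h\GG}$ — we have $\pi_\ast(\E\smsh\Sdet)=\E_\ast\Sdet$ and $\pi_\ast(\E\smsh\ST)=\E_\ast\ST$, so $\pi_\ast f$ is a map of Morava modules $\E_\ast\Sdet\to\E_\ast\ST\cong\E_\ast\DDet$.

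To identify $\pi_\ast f$, recall that the equivalence of \Cref{prop:CommuteHFP} is induced by the natural comparison $\E\smsh\holim_\Delta W^\bullet\to\holim_\Delta(\E\smsh W^\bullet)$ furnished by \Cref{prop:DHSS}, applied to the cosimplicial spectrum $W^\bullet=\Func(\GG_+^{\bullet+1},\E\smsh\ST)^\GG$ computing $\Sdet$. This comparison is natural in the cosimplicial variable and compatible with augmentations, and one has $\E\smsh\Func(T_+,-)\simeq\Func(T_+,\E\smsh-)$ levelwise (suppressing $\Kn$-localizations), so it carries $1\smsh\iota$ — which is $\E$ smashed with the inclusion $W^\bullet\hookrightarrow\Func(\GG_+^{\bullet+1},\E\smsh\ST)$ of cosimplicial spectra, see \Cref{lem:inclusion-of-fixed} — to the inclusion-of-fixed-points map $(\E\smsh\E\smsh\ST)^{h\GG}\to\E\smsh\E\smsh\ST$. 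On homotopy groups that inclusion is the edge homomorphism of the spectral sequence $H^s_c(\GG,\E_\ast(\E\smsh\ST))\Rightarrow\pi_{t-s}\bigl((\E\smsh\E\smsh\ST)^{h\GG}\bigr)$, which — exactly as in the proof of \Cref{cor:ActMoravaModule} — collapses and carries $\E_\ast\Sdet$ isomorphically onto the invariants $\Map^c(\GG,\E_\ast\ST)^\GG$ inside $\E_\ast(\E\smsh\ST)\cong\Map^c(\GG,\E_\ast\ST)$, the invariants for right translation, i.e. the constant functions. Postcomposing with $\pi_\ast(\mu\smsh1)$, which under the identification $\eta$ of \eqref{eq:fund-iso} is evaluation at $e\in\GG$ and therefore restricts on the constant functions to an isomorphism onto $\E_\ast\ST$, we conclude that $\pi_\ast f$ is an isomorphism; unwinding the identifications, it is precisely the composite of the isomorphisms of \Cref{cor:ActMoravaModule} and \Cref{prop:homotopy-ES}, hence the isomorphism $\E_\ast\Sdet\cong\E_\ast\DDet$ of \Cref{prop:what-were-here-for}.

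The step I expect to demand the most care is the identification in the previous paragraph: checking that the equivalence produced by \Cref{prop:DHSS} and \Cref{prop:CommuteHFP} is genuinely compatible with the cosimplicial augmentations, so that $1\smsh\iota$ corresponds to the inclusion of fixed points, and that $\pi_\ast(\mu\smsh1)$ is evaluation at the identity of $\GG$. Both amount to bookkeeping with the constructions of \Cref{sec:continuousactions}, but they are what make the relevant diagram commute; once they are in place the conclusion is formal.
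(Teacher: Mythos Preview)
Your proof is correct and follows the same strategy as the paper: identify $\pi_\ast f$ with the composite ``inclusion of $\GG$-invariants followed by evaluation at $e$'' on $\Map^c(\GG,\E_\ast\ST)$, and observe that the invariants are the constant functions. The paper simply writes down the commuting square relating the two rows and cites \Cref{cor:ActMoravaModule} and \Cref{lem:etaiso} for the vertical identifications; you unpack more of why that square commutes (via the edge homomorphism and the explicit formula for $\eta$), which is exactly the bookkeeping the paper suppresses.
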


\begin{proof} To check that $f$ is an equivalence we need only check that it induces the indicated map on Morava modules. 
Applying $\pi_*(-)$ to \eqref{eq:keycomposite} gives
\begin{align}\label{eq:compo}
\xymatrix{
\E_*\Sdet \ar[r] \ar[d]^-{\cong}& \E_*( \E \smsh \ST) \ar[d]^-{\cong} \ar[r]^-{\mu \smsh 1} & \E_* \ST \ar[d]^-{=}  \\
\Map^c(\G, \E_*\ST )^{\G} \ar[r] & \Map^c(\G, \E_*\ST) \ar[r]  & \E_*\ST.}
\end{align}
The first vertical isomorphism is from \Cref{cor:ActMoravaModule}, whereas the second is the isomorphism of \Cref{lem:etaiso}.
In the bottom row, the first map is the inclusion of fixed points and the second map is evaluation at the unit $e \in \G$. 
The fixed points on the bottom left are exactly the constant functions, so the composite is an isomorphism as claimed.
\end{proof}

This yields the following practical invariance result. 

\begin{cor}\label{cor:invariance}
If $\K $ is a closed subgroup of $\G$ which is in the kernel of the determinant, then $\E^{h\K} \smsh \Sdet \simeq \E^{h\K}$.
\end{cor}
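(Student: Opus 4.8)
The plan is to deduce this from \Cref{thm:pretty-damn-cool} by passing to $\K$-homotopy fixed points. The map $f\colon\E\smsh\Sdet\to\E\smsh\ST$ is $\GG$-equivariant, hence in particular $\K$-equivariant, so applying $(-)^{h\K}=\holim_\Delta\Func(\GG_+^{\bullet+1},-)^\K$ levelwise to the two cosimplicial diagrams and then taking $\holim_\Delta$ produces an equivalence $(\E\smsh\Sdet)^{h\K}\xrightarrow{\ \simeq\ }(\E\smsh\ST)^{h\K}$. It then remains to identify the two sides, which I would do separately.

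For the source: in the source of $f$ the group $\GG$ acts only through the factor $\E$, so $\Sdet$ carries the trivial action; moreover $\Sdet$ is $\Kn$-locally dualizable, since by \Cref{prop:what-were-here-for} the module $\E_*\Sdet\cong\E_*\langle\det\rangle$ is free of rank one over $\E_*$, hence finitely generated \cite[Thm.~8.6]{hovstrmemoir}. Since $\Sdet$ is dualizable, $(-)\smsh\Sdet$ agrees with $F(D_n\Sdet,-)$ and therefore commutes with the homotopy limits, filtered homotopy colimits, and $\holim_\Delta$ appearing in the definition of $(-)^{h\K}$ (exactly as in the proofs of \Cref{lem:ContinuousMaps} and \Cref{prop:CommuteHFP}), and we conclude $(\E\smsh\Sdet)^{h\K}\simeq\E^{h\K}\smsh\Sdet$.

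For the target: since $\K\subseteq\SG=\ker(\det)$, the composite $\K\hookrightarrow\GG\xrightarrow{\det}\Z_p^\times\to(\Z/p^k)^\times$ is trivial for every $k\geq 1$. Inspecting the construction of the continuous structure on $\ST$ (\Cref{prop:cont-zz-Sone}, \Cref{defn:S(1)}) and on $\E\smsh\ST$ (\Cref{cont-for-ES1}), where $\GG$ acts on each $S/p^k(1)$ only through $\GG/U_j\to(\Z/p^k)^\times$, this shows that the restriction of $\ST$ to $\K$ is the trivial $p$-complete $\K$-sphere $S^0_p$; hence $\E\smsh\ST$ restricts along $\K\hookrightarrow\GG$ to $\E\smsh S^0_p\simeq\E$ carrying its Devinatz--Hopkins continuous structure, and $(\E\smsh\ST)^{h\K}\simeq\E^{h\K}$. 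Chaining the equivalences gives $\E^{h\K}\smsh\Sdet\simeq(\E\smsh\Sdet)^{h\K}\simeq(\E\smsh\ST)^{h\K}\simeq\E^{h\K}$.

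I expect no serious difficulty once \Cref{thm:pretty-damn-cool} is available: the corollary is essentially formal, and the one point demanding care is the bookkeeping of continuous structures under restriction along $\K\hookrightarrow\GG$, together with the (by now routine) fact that a trivially-acting $\Kn$-locally dualizable spectrum passes freely through the homotopy fixed point construction. One could instead read the equivalence $(\E\smsh\ST)^{h\K}\simeq\E^{h\K}$ off the spectral sequence of \Cref{rem:some-ss}, noting that $\E_*\langle\det\rangle$ restricted to $\K$ coincides with $\E_*$ as a $\K$-module, but that route only delivers an abstract isomorphism on homotopy groups, whereas going through $f^{h\K}$ yields an honest equivalence of spectra.
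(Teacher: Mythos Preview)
Your proposal is correct and follows essentially the same route as the paper: take $\K$-fixed points of the $\GG$-equivariant equivalence $f$, identify the source using that $\Sdet$ is $\Kn$-locally dualizable with trivial $\K$-action, and identify the target using that $\K\subseteq\ker(\det)$ forces the restricted action on $\ST$ to be trivial. The paper's argument is simply a terser version of what you wrote.
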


\begin{proof} We use \Cref{thm:pretty-damn-cool}. 
When we restrict the $\G$-action on the Tate sphere $\ST$ to $\K$, we get that $\K$ acts trivially, so $\ST$ is $\K$-equivariantly equivalent to $S^0$.  We have
\[ \E^{h\K} \smsh \Sdet \simeq (\E \smsh \Sdet)^{h\K} \simeq (\E \smsh \ST)^{h\K} \simeq \E^{h\K}, \]
where the first equivalence follows since $\Sdet$ is a $\Kn$-locally dualizable spectrum with trivial $\K$-action.
\end{proof}

\begin{rem} 
The specifics of the determinant homomorphism are not relevant for this construction and its immediate properties. Indeed, for any continuous homomorphism $\phi \colon \G \to \Z_p^\times$, we may define a $\Kn$-local $\phi$-twisted sphere by the formula
\[S\langle \phi \rangle = (\E \smsh \ST)^{h\G}, \]
where on the right hand side $\G$ acts diagonally, and through $\phi$ on $\ST$. The proof of \Cref{prop:what-were-here-for} generalizes to compute the corresponding Morava module as 
\[
\E_\ast S \langle \phi \rangle \cong \E_\ast\langle \phi\rangle,
\]
where the right hand side denotes the action on $\E_\ast$ obtained by twisting the standard action with $\phi$.
This construction amounts to giving the dashed lift as indicated in the following diagram involving the group $\Pic_n^0$ of $\Kn$-local spectra $X$ with $\E_*X \cong \E_*$ and the algebraic Picard group $(\Pic_n)_{\alg}^0$ of invertible $\G$-$\E_0$-modules:
\[
\xymatrix{& \Pic_n^0 \ar[d]\\
H_c^1(\G,\ZZ_p^\times) \ar[r] \ar@{-->}[ru] & (\Pic_n)_{\alg}^0 \cong H^1_c(\G, \E_0^{\times}).}
\]
The bottom horizontal map is induced by the inclusion $\ZZ_p^{\times} \to \E_0^{\times}$.

We note that the determinant homomorphism topologically generates most of the image of the depicted horizontal arrow, so we are not losing much information by restricting our attention to its study.  In particular, Westerland's version of the determinant \cite{craig_imj} and ours have the same image in the algebraic Picard group. Indeed, they agree on $\mathbb{S} \subseteq \G$ and the map from $H_c^1(\G,\ZZ_p^\times)$ to $(\Pic_n)_{\alg}^0$ factors through
\[H^1_c(\G, \W^{\times}) \cong H^1_c(\mathbb{S}, \W^{\times})^{\Gal}. \]
\end{rem}

\section{Deconstructing the determinant sphere}

Let $S\G \subseteq \GG$ be the kernel of the determinant. Then we can form the fixed point spectrum $\E^{h \SG}$.
This will have a residual action of $\GG/\SG \cong \ZZ_p^\times$. (See the paragraph before Theorem 4 in \cite{DH}.) Furthermore
\[
(\E \wedge \ST)^{h \SG} \simeq \E^{h \SG} \wedge \ST,
\]
where the right hand side has a diagonal $\ZZ_p^\times$-action. 

At odd primes we get a simple description of $\Sdet$ directly from Devinatz--Hopkins fixed point theory. 

\begin{prop}\label{prop:fiber1} Let $p > 2$ and let $\phi \in \GG$ be any element so that $\det(\phi)$ topologically
generates $\ZZ_p^\times$. Then there is a fiber sequence
\[
\xymatrix{
\Sdet \rto & \E^{h \SG} \ar[rr]^-{\det(\phi)\phi-1} && \E^{h \SG}.
}
\]
\end{prop}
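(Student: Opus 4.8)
The plan is to reduce the statement to the behavior of homotopy fixed points for the procyclic group $\ZZ_p^\times$. First I would record the decomposition $\Sdet\simeq(\E^{h\SG}\smsh\ST)^{h\ZZ_p^\times}$: by the equivalence recalled at the start of this section, $(\E\smsh\ST)^{h\SG}\simeq\E^{h\SG}\smsh\ST$ with its residual, diagonal, continuous $\GG/\SG\cong\ZZ_p^\times$-action, and by the iterated homotopy fixed point property of the Devinatz--Hopkins construction for the closed normal subgroup $\SG$ of $\GG$ (cf.\ the paragraph before Theorem~4 of \cite{DH}), $\Sdet=(\E\smsh\ST)^{h\GG}\simeq\bigl((\E\smsh\ST)^{h\SG}\bigr)^{h\ZZ_p^\times}$. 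It then suffices to prove the following general fact and to identify the map in our case: if $Y$ is a continuous $\ZZ_p^\times$-spectrum (as in \Cref{rem:otherG}) whose homotopy groups satisfy the finiteness hypothesis of \Cref{rem:HFPSS}, and $\gamma\in\ZZ_p^\times$ is a topological generator --- which exists precisely because $p$ is odd, so that $\ZZ_p^\times\cong\mu_{p-1}\times\ZZ_p$ is procyclic --- then $Y^{h\ZZ_p^\times}\to Y\xrightarrow{\gamma-1}Y$ is a fiber sequence.

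For the general fact: the inclusion of fixed points $Y^{h\ZZ_p^\times}\to Y$ is $\ZZ_p^\times$-equivariant for the trivial action on the source by \Cref{lem:inclusion-of-fixed} (which applies verbatim to $\ZZ_p^\times$-spectra), so its composite with $\gamma-1$ is canonically nullhomotopic and defines a map $\theta\colon Y^{h\ZZ_p^\times}\to\mathrm{fib}(\gamma-1)$; I would then check $\theta$ is an equivalence on homotopy. Since $\ZZ_p^\times$ has $p$-cohomological dimension $1$, the homotopy fixed point spectral sequence of \Cref{rem:HFPSS} collapses at $E_2$ into short exact sequences
\[
0\to H^1_c(\ZZ_p^\times,\pi_{t+1}Y)\to \pi_t Y^{h\ZZ_p^\times}\to H^0_c(\ZZ_p^\times,\pi_tY)\to 0,
\]
while the fiber sequence gives $0\to\coker(\gamma-1\mid\pi_{t+1}Y)\to\pi_t\,\mathrm{fib}(\gamma-1)\to\ker(\gamma-1\mid\pi_tY)\to0$. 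The complex $\ZZ_p\llbracket\ZZ_p^\times\rrbracket\xrightarrow{\gamma-1}\ZZ_p\llbracket\ZZ_p^\times\rrbracket\to\ZZ_p$ is a free resolution of $\ZZ_p$ over $\ZZ_p\llbracket\ZZ_p^\times\rrbracket$: under the idempotent splitting $\ZZ_p\llbracket\ZZ_p^\times\rrbracket\cong\prod_\chi\ZZ_p\llbracket T\rrbracket$ over the characters $\chi$ of $\mu_{p-1}$, the operator $\gamma-1$ acts as $T$ on the trivial factor and as a unit on every nontrivial factor (since $\zeta-1\in\ZZ_p^\times$ for each nontrivial $(p-1)$st root of unity $\zeta$). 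Computing continuous cohomology with this resolution identifies $H^0_c(\ZZ_p^\times,M)=\ker(\gamma-1\mid M)$ and $H^1_c(\ZZ_p^\times,M)=\coker(\gamma-1\mid M)$ naturally in $M$, compatibly with $\theta$; hence $\theta$ is an isomorphism on homotopy and an equivalence.

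Finally I would specialize to $Y=\E^{h\SG}\smsh\ST$ and $\gamma=\det(\phi)$, and identify the second map. A priori it is the diagonal $\det(\phi)$-action minus the identity on $\E^{h\SG}\smsh\ST$; since $\ST\simeq S^0_p$ non-equivariantly and $\E^{h\SG}$ is $p$-complete, $\E^{h\SG}\smsh\ST\simeq\E^{h\SG}$, and under this equivalence the diagonal action of $g\in\ZZ_p^\times$ becomes the composite of the residual action of $g$ on $\E^{h\SG}$ with multiplication by $g\in\ZZ_p\subseteq\pi_0\E^{h\SG}$. As the residual action of the class of $\phi$ in $\GG/\SG$ corresponds via $\det\colon\GG/\SG\xrightarrow{\cong}\ZZ_p^\times$ to that of $\det(\phi)$, this map is exactly $\det(\phi)\phi-1$, giving the asserted fiber sequence. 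I expect the main obstacle to be the general procyclic statement together with the iterated fixed point input --- making the canonical nullhomotopy and the comparison map $\theta$ precise inside the point-set framework of \Cref{sec:continuousactions} and checking the $E_2$-collapse --- rather than the final identification, which is bookkeeping once the residual and diagonal $\ZZ_p^\times$-actions are kept separate.
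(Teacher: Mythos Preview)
Your argument is correct and lands on the same fiber sequence, but it is organized rather differently from the paper. The paper does not invoke iterated homotopy fixed points or prove a general procyclic lemma via the HFPSS; instead it writes down the commutative square identifying the diagonal map $\phi\wedge\det(\phi)-1$ on $\E^{h\SG}\wedge\ST$ with $\det(\phi)\phi-1$ on $\E^{h\SG}$, observes that $\Sdet\to\E^{h\SG}\wedge\ST$ composed with this map is nullhomotopic (since $\Sdet$ maps through the $\GG$-fixed points), and then checks the resulting map $\Sdet\to F$ is an equivalence directly on Morava modules using $\E_\ast\E^{h\SG}\cong\Map^c(\ZZ_p^\times,\E_\ast)$ and $\E_\ast\Sdet\cong\E_\ast\langle\det\rangle$.

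Your route has the advantage of isolating a reusable statement about $\ZZ_p^\times$-homotopy fixed points and making the role of the topological generator transparent via the free resolution $\ZZ_p\llbracket\ZZ_p^\times\rrbracket\xrightarrow{\gamma-1}\ZZ_p\llbracket\ZZ_p^\times\rrbracket$; the paper's route is shorter because the Morava module of both $\Sdet$ and $\E^{h\SG}$ have already been computed, so the check reduces to a one-line identification of constant functions rather than an $E_2$-collapse argument. One caution: your general lemma needs the finiteness hypothesis of \Cref{rem:HFPSS} for $Y=\E^{h\SG}\wedge\ST$ in the $p$-complete (rather than $\Kn$-local) sense, which you assert but do not verify; the paper sidesteps this entirely by working with $\E_\ast$ instead of $\pi_\ast$, where detecting equivalences of $\Kn$-local spectra is automatic.
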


\begin{proof} By construction, the action of $g \in \GG$ on $S(1)$ is given, up to homotopy, by multiplication by $\det(g) \in \ZZ_p^\times$. Thus, the diagonal action of $\phi$ on $\E^{h\SG} \wedge S(1)$ is, up to homotopy,
given by 
\[
\phi \wedge \det(\phi)\colon \E^{h \SG} \wedge \ST \longr \E^{h \SG} \wedge \ST.
\]
Using that $\ST$ is non-equivariantly the sphere $S^0$, we have  a 
homotopy commutative diagram
\[
\xymatrix{
\E^{h \SG} \wedge \ST \ar[rr]^-{\phi \wedge \det(\phi) - 1} \dto_\simeq && \E^{h \SG} \wedge \ST\dto^\simeq\\
\E^{h \SG} \ar[rr]^-{\det(\phi)\phi-1} && \E^{h \SG}.
}
\]
Let $F$ be fiber of the bottom map. The composition
\[
\xymatrix{
\Sdet = (\E \wedge \ST)^{h\GG} \rto & \E^{h \SG} \wedge \ST \ar[rr]^-{\phi \wedge \det(\phi) - 1} && \E^{h \SG} \wedge \ST
}
\]
is null-homotopic, so we get a map $f\colon\Sdet \to F$. Using the fact that
\[
\E_\ast \E^{h \SG} \cong \Map^c(\GG/\SG,\E_\ast) \cong  \Map^c(\ZZ_p^\times,\E_\ast)
\]
we compute that $f$ induces an isomorphism of Morava modules.
\end{proof}

We can refine the fiber sequence of \Cref{prop:fiber1}. We still have $p > 2$ and we have a splitting
\[
\mmu \times (1+p\ZZ_p) \cong \ZZ_p^\times.
\]
The group $\mmu \cong \FF_p^\times$ is cyclic of order $p-1$ and  $(1+p\ZZ_p)$ is isomorphic to $\ZZ_p$ itself.

Let $\alpha \in \W^\times \subseteq \GG$ be a primitve $(p^n-1)$st
root of unity; then $\det(\alpha) \in \mmu$ is a generator. The group $\mmu \subseteq \ZZ_p^\times$ acts
on $\E^{h \SG}$ and, since this group is abstractly isomorphic to $C_{p-1}$, the spectrum $\E^{h \SG}$ splits as a
wedge of the
eingenspectra for this action. Let $\E_\chi^{h \SG}$ be the summand defined by the equations
\[
\pi_\ast \E_\chi^{h \SG} = \{\ x \in \pi_\ast \E^{h S \GG}\ |\ \alpha_\ast x = \det(\alpha)^{-1} x\ \}.
\]
Note that the spectrum $\E_\chi^{h \SG} $ corresponds to $(\E^{h\SG}\smsh S(1))^{h\mmu}$.
Indeed, forgetting the $\mmu$-action and remembering that the underlying spectrum of $S(1)$ is the $p$-complete sphere, the map which sends $x\in \pi_*(\E^{h\SG})$ to $x\smsh 1 \in \pi_*(\E^{h\SG}\smsh S(1))$ is a non-equivariant isomorphism.
Now note that if $\alpha_*(x) =  \det(\alpha)^{-1} x$ in $\pi_*\E^{h\SG}$ then $\alpha_*(x \smsh 1) = \alpha_*(x) \smsh \det(\alpha) = x \smsh 1$ in $\pi_*(\E^{h\SG}\smsh S(1))$ so that 
\[x\smsh 1 \in (\pi_*(\E^{h\SG}\smsh S(1)))^{\mmu} \cong \pi_*(\E^{h\SG}\smsh S(1))^{h\mmu}.\]

\begin{prop}\label{prop:fiber2} Let $p > 2$ and let $\psi \in \GG$ be any element so that $\det(\psi)$ topologically
generates $1+p\ZZ_p \subseteq \ZZ_p^\times$. Then there is a fiber sequence
\[
\xymatrix{
\Sdet \rto & \E_\chi^{h \SG} \ar[rr]^-{\det(\psi)\psi-1} && \E_\chi^{h \SG}.
}
\]
\end{prop}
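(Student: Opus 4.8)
The plan is to mimic the proof of \Cref{prop:fiber1}, now incorporating the refinement by the $\mmu$-eigenspace splitting. First I would record the analogue of the commutative diagram there, but with $\E^{h\SG}$ replaced by $\E_\chi^{h\SG} \simeq (\E^{h\SG} \smsh \ST)^{h\mmu}$. The point is that $\Sdet = (\E \smsh \ST)^{h\GG}$ can be computed in stages: first take homotopy fixed points for $\SG$ to get $(\E \smsh \ST)^{h\SG} \simeq \E^{h\SG} \smsh \ST$ with its residual diagonal $\ZZ_p^\times$-action, then take homotopy fixed points for $\ZZ_p^\times \cong \mmu \times (1+p\ZZ_p)$. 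Taking $\mmu$-fixed points of $\E^{h\SG}\smsh \ST$ gives exactly $\E_\chi^{h\SG}$ by the eigenspace discussion just before the statement. So $\Sdet \simeq \bigl(\E_\chi^{h\SG}\bigr)^{h(1+p\ZZ_p)}$ with the residual action, where $1+p\ZZ_p$ acts on $\E_\chi^{h\SG}$ via $\psi$ paired with the $\det$-twist coming from $\ST$.

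Next I would build the candidate map into the fiber. Since $1+p\ZZ_p \cong \ZZ_p$ is topologically generated by a single element whose image under $\det$ we can realize by $\psi$, the homotopy fixed point spectrum for $1+p\ZZ_p$ is the fiber of $\det(\psi)\psi - 1$ acting on $\E_\chi^{h\SG}$ — here again I use that $\ST$ is non-equivariantly $S^0_p$, so the operator $\psi \smsh \det(\psi) - 1$ on $\E^{h\SG}\smsh \ST$ transports to $\det(\psi)\psi - 1$ on $\E_\chi^{h\SG}$ under the non-equivariant identification, exactly as in \Cref{prop:fiber1}. The composite $\Sdet \to \E_\chi^{h\SG} \xrightarrow{\det(\psi)\psi-1} \E_\chi^{h\SG}$ is null (it factors through the ${h(1+p\ZZ_p)}$-fixed points), producing a map $f\colon \Sdet \to F$ where $F$ is the fiber.

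Finally I would check $f$ is an equivalence by a Morava module computation. Using \Cref{thm:DH-fixed-pts} and the eigenspace description, $\E_\ast \E_\chi^{h\SG}$ is the $\det(\alpha)^{-1}$-eigenspace of $\mmu$ acting on $\Map^c(\ZZ_p^\times, \E_\ast)$, which one identifies with $\Map^c(1+p\ZZ_p, \E_\ast)$ (twisted appropriately); applying the long exact sequence of the fiber $F$ and comparing with $\E_\ast\Sdet \cong \E_\ast\DDet$ from \Cref{prop:what-were-here-for}, one sees $f$ induces an isomorphism on $\E_\ast$, hence is a $\Kn$-local equivalence. The main obstacle I expect is bookkeeping the $\det$-twists across the two-stage fixed point computation — in particular verifying that the residual $1+p\ZZ_p$-action on $\E_\chi^{h\SG}$ really is $\det(\psi)\psi$ and not, say, $\psi$ untwisted or twisted by the wrong character, since the twist from $\ST$ must survive passage to the $\mmu$-fixed points. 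Everything else is a direct transcription of the argument for \Cref{prop:fiber1}.
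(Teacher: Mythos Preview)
Your proposal is correct and takes essentially the same approach as the paper, which simply states that the proof is very similar to that of \Cref{prop:fiber1}; you have faithfully unpacked what that entails, including the two-stage fixed point computation via $\SG$ then $\mmu \times (1+p\ZZ_p)$, the identification $(\E^{h\SG}\smsh \ST)^{h\mmu} \simeq \E_\chi^{h\SG}$ established just before the statement, and the final Morava module check. Your flagged concern about tracking the $\det$-twist through the $\mmu$-fixed points is exactly the one nontrivial bookkeeping point, and the paper's pre-proposition discussion of the map $x \mapsto x \smsh 1$ resolves it.
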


The proof is very similar to  that of \Cref{prop:fiber1}. This fiber sequence appears in \cite[Rem.~2.5]{GHM_pic}
although there is a typo there: the factor of $\det(\psi^{p+1})$ should be replaced by $\det(\psi)^{-(p+1)}$ in Equation (2.6).

At the prime $2$ we have $\ZZ_2^\times \cong \mmu \times (1+4\ZZ_p)$ for $\mmu =\{\pm 1\}$ and the decomposition is more subtle. In particular, $\E^{h \SG}$ does not decompose as a wedge of $\mmu$-eigenspectra, where $\mmu$ acts 
on $\E^{h \SG}$ through $\ZZ_2^{\times} \cong \G/\SG$. Thus we need a replacement. The
following construction expands on ideas of Hans-Werner Henn. 

It follows from its construction in \Cref{prop:the-disc-action} that, as a $\mmu$-spectrum, $S(1)$ is $S^{\sigma -1}$ where $
\sigma$ is the one-dimensional sign representation of $\mmu$. We have a fiber sequence of $\mmu$-spectra
\[
S^{\sigma-1} \to S^0 \wedge \mmu_+ \to S^0
\]
where $S^0$ has the trivial action. This is a fiber sequence of $\ZZ_2^\times$-spectra by
restriction along the quotient map $\ZZ_2^\times \to \mmu$ with kernel $1 + 4\ZZ_2$. 

We smash this sequence with $\E^{hS\GG}$ and use the diagonal action
to obtain a fiber sequence of $\ZZ_2^\times$-spectra
\[
\E^{hS\GG} \wedge S^{\sigma-1} \to \E^{hS\GG} \wedge \mmu_+ \to \E^{hS\GG}.
\]
Now take
$\mmu$-homotopy fixed points to get a fiber sequence of $\ZZ_2 \cong \ZZ_2^\times/\mmu$-spectra. We give a special name to the fiber, i.e., we denote by $\E^{h \SG}_{-} $ the spectrum
\[
\E^{h \SG}_{-} = (\E^{h\SG} \wedge S^{\sigma-1})^{h\mmu},
\]
where $\mmu$ acts diagonally on the right-hand side.  Thus, we have the fiber sequence
\begin{equation}\label{eq:the-funny-cofibering}
\E^{h \SG}_{-} \longr \E^{h \SG}   \xrightarrow{ \ \mathrm{tr} \ } (\E^{h \SG})^{h\mmu},
\end{equation}
where $\mathrm{tr}$ is the transfer.

Now let $\psi \in \GG$ is any element so that $\det(\psi)$ topologically
generates $1+4\ZZ_2$. Since \eqref{eq:the-funny-cofibering} is a cofiber sequence
of $ \ZZ_2^\times/\mmu$-spectra there is an extension of the map
$\psi\colon\E^{h \SG} \to \E^{h \SG}$ to a commutative diagram
\[
\xymatrix{
\E^{h \SG}_{-}\dto_\psi  \rto & \E^{h \SG}\dto^\psi \\
 \E_{-}^{h \SG} \rto &\E^{h \SG}.
}
\]

\begin{prop}\label{prop:fiber3} Let $p = 2$. Then there is a fiber sequence
\[
\xymatrix{
\Sdet \rto & \E_{-}^{h \SG} \ar[rr]^-{\det(\psi)\psi-1} && \E_{-}^{h \SG}.
}
\]
\end{prop}

\begin{proof} The argument is essentially the same as in \Cref{prop:fiber1}. Here is more detail.  By construction
\[
\Sdet = (\E \wedge S(1))^{h\GG} \simeq (\E^{h\SG} \wedge S(1))^{h\ZZ_2^\times}. 
\]
Using the decomposition $\ZZ_2^\times = (1+4\ZZ_2 )\times \mmu$ we obtain a fiber sequence
\[
\xymatrix@C=40pt{
\Sdet \ar[r] & (\E^{h\SG} \wedge S(1))^{h\mmu} \ar[rr]^-{(\psi \wedge \det(\psi)-1)^{h\mmu}} && 
(\E^{h\SG} \wedge S(1))^{h\mmu}.
}
\]
Here we are again using that, up to homotopy, $g \in \GG$, acts on $S(1) = S^0$ by multiplication by $\det(g)$. 
Since $\E^{h\SG} \wedge S(1) \simeq \E^{h\SG} \wedge S^{\sigma-1}$ as $\mmu$-spectra, we have a
commutative diagram
\[
\xymatrix@C=40pt{
(\E^{h \SG} \wedge \ST)^{h\mmu} \ar[rr]^-{(\psi \wedge \det(\psi)-1)^{h\mmu}}\dto_\simeq &&
(\E^{h \SG} \wedge \ST)^{h\mmu}\dto^\simeq\\
\E_{-}^{h \SG} \ar[rr]^-{\det(\phi)\phi-1} && \E_{-}^{h \SG}.
}
\]
The result follows. 
\end{proof}

\begin{example}\label{ex:height-one!}
At height $1$, the determinant map $\GG \to \ZZ_p^\times$ is the identity.
We can also choose $\E =K$, the $p$-completion on complex theory. We have that
\[
K_\ast S^2 \cong K_\ast \langle \det \rangle
\]
so the $\K$-localization of $S^2$ is a valid model for the determinant sphere. If $p >2$, this must be the same as ours,
but at $p=2$ there is a possibility that $\Sdet \simeq S^2 \wedge P$, where $P = DQ$ is the dual of the `question mark complex'. By \cite{HMS_pic}, $P$ is the unique element in the $\Kn$-local
Picard group 
so that $K_\ast P \cong K_\ast S^0$ as $\ZZ_2^\times$-modules 
but $KO \wedge P \simeq \Sigma^{4}KO$. This possibility turns out to be the case. 

To see this, we observe that $KO= K^{h\mmu}$. We can use \Cref{thm:pretty-damn-cool} to deduce that $\ZZ_2^\times$-equivariantly, and therefore $\mmu$-equivariantly, we have an equivalence $K\wedge \Sdet \simeq K \wedge \ST$, where the action on the right hand side is diagonal. As mentioned above, $\mmu$-equivariantly $\ST $ is the representation sphere $S^{\sigma - 1}$, so we conclude that
\[ (K\wedge \Sdet)^{h\mmu} \simeq (K\wedge S^{\sigma-1})^{h\mmu}.  \]
By \Cref{prop:CommuteHFP}, we get that the left-hand side is $KO \wedge \Sdet$. For the right-hand side, we can use the $\mmu$-equivariant Bott periodicity equivalence $K\wedge S^{\sigma+1} \simeq K$ to conclude, altogether, that
\[ KO \wedge \Sdet  \simeq (K \wedge S^{-2})^{h\mmu} \simeq \Sigma^{-2} KO. \]
Thus $\Sdet \simeq S^2 \wedge P$.

Note that in this case we have shown that $\E_{-}^{h\SG} = \Sigma^{-2}KO$, and the fiber
sequence of \Cref{prop:fiber3} is a shifted version of that given by $P$ in \cite[Ex.~5.1]{GHM_pic}.
\end{example}

\bibliographystyle{amsalpha}
\bibliography{bib}

\end{document}